\newtheorem{defin}{Definition}
\newtheorem{lemma}{Lemma}
\newtheorem{prop}{Proposition}
\newtheorem{theo}{Theorem}
\newenvironment{proof}{\medskip\par\noindent{\bf Proof}}{\hfill $\Box$
\medskip\par}
\def\C{\mathbb{C}}
\def\N{\mathbb{N}}
\def\R{\mathbb{R}}
\begin{document}
\title{Multi-level Gevrey solutions of singularly perturbed linear partial differential equations}
\author{{\bf A. Lastra\footnote{The author is partially supported by the project MTM2012-31439 of Ministerio de Ciencia e
Innovacion, Spain}, S. Malek\footnote{The author is partially supported by the french ANR-10-JCJC 0105 project and the PHC
Polonium 2013 project No. 28217SG.}}\\
University of Alcal\'{a}, Departamento de F\'{i}sica y Matem\'{a}ticas,\\
Ap. de Correos 20, E-28871 Alcal\'{a} de Henares (Madrid), Spain,\\
University of Lille 1, Laboratoire Paul Painlev\'e,\\
59655 Villeneuve d'Ascq cedex, France,\\
{\tt alberto.lastra@uah.es}\\
{\tt Stephane.Malek@math.univ-lille1.fr }}
\maketitle
\thispagestyle{empty}
{ \small \begin{center}
{\bf Abstract}
\end{center}

We study the asymptotic behavior of the solutions related to a family of singularly perturbed linear partial differential equations in the complex domain. The analytic solutions obtained by means of a Borel-Laplace summation procedure are represented by a formal power series in the perturbation parameter. Indeed, the geometry of the problem gives rise to a decomposition of the formal and analytic solutions so that a multi-level Gevrey order phenomenon appears. This result leans on a Malgrange-Sibuya theorem in several Gevrey levels.

\medskip

\noindent Key words: Linear partial differential equations, singular perturbations, formal power series, Borel-Laplace transform,
Borel summability, Gevrey asymptotic expansions.

\noindent 2000 MSC: 35C10, 35C20}
\bigskip 

\section{Introduction}

We study a family of singularly perturbed linear partial differential equations of the following form
\begin{equation}\label{e1}
(\epsilon^{r_2}(t^{k+1}\partial_t)^{s_2}+a_2)(\epsilon^{r_1}(t^{k+1}\partial_t)^{s_1}+a_1)\partial_{z}^{S}X(t,z,\epsilon)
=\sum_{(s,\kappa_0,\kappa_1)\in\mathcal{S}}b_{\kappa_0\kappa_1}(z,\epsilon)t^s(\partial_{t}^{\kappa_0}\partial_{z}^{\kappa_1}X)(t,z,\epsilon),
\end{equation}
for given initial conditions
\begin{equation}\label{e2}
(\partial_{z}^{j}X)(t,0,\epsilon)=\phi_{i,j}(t,\epsilon),\quad 0\le j\le S-1,
\end{equation}
where $r_1$ and $r_2$ stand for nonnegative integers (i. e. they belong to $\mathbb{N}=\{0,1,...\}$), and $s_1,s_2$ are positive integers. We also fix $a_1,a_2\in\mathbb{C}^{\star}$. $\mathcal{S}$ consists of a finite subset of elements $(s,\kappa_0,\kappa_1)\in\mathbb{N}^{3}$. We assume that $S>\kappa_1$ for every $(s,\kappa_0,\kappa_1)\in\mathcal{S}$, and also that $b_{s,\kappa_0,\kappa_1}(z,\epsilon)$ belongs to the space of holomorphic functions in a neighborhood of the origin in $\mathbb{C}^2$, $\mathcal{O}\{z,\epsilon\}$. 

The initial data consist of holomorphic functions defined in a product of finite sectors with vertex at the origin.

The framework of our study is the asymptotic study of singularly perturbed Cauchy problems of the form
\begin{equation}\label{e66}
L(t,z,\partial_t,\partial_z,\epsilon)[u(t,z,\epsilon)]=0,
\end{equation}
where $L$ is a linear differential operator, for some given initial conditions $(\partial_z^{j}u)(t,0, \epsilon)=h_j(t,\epsilon)$, $0\le j\le \nu-1$ belonging certain functional spaces. Here, $\epsilon$ plays the role of a perturbation parameter near the origin and it turns out to be the variable in which asymptotic solutions are being obtained. There is a wide literature dealing with the case where $\epsilon$ is real, $L=\epsilon^mL_1(t,z,\partial_t,\partial_z)$ is acting on $\mathcal{C}^{\infty}(\mathbb{R}^{d})$ functions or Sovolev spaces $H^{s}(\mathbb{R}^d)$. For a survey on this topic, we refer to~\cite{ka}. 

On the other hand, the case for complex perturbation parameter $\epsilon$ has also been studied when solving partial differential equations; in particular, when dealing with solutions belonging to spaces of analytic functions for singularly perturbed partial differential equations which exhibit several singularities of different nature. On this direction, one can cite the work by M. Canalis-Durand, J. Mozo-Fern\'andez and R. Sch\"{a}fke~\cite{cms}, S. Kamimoto~\cite{kami}, the second author~\cite{ma1,ma2}, and the first and the second author and J. Sanz~\cite{lamasa1}. In this last work, the appearance of both, irregular and fuchsian singularities in the problem causes that the Gevrey type concerning the asymptotic representation of the formal solution varies with respect to a problem in which only one type of such singularities appears. 

The asymptotic behavior of the solution in the problem under study (\ref{e1}), (\ref{e2}) differs from the previous ones for the singularities are of different nature. Indeed, the appearance of two irregular singularities $t^{k+1}\partial_t$ perturbed by a certain power of $\epsilon$ enriches the accuracy of the information provided in the sense that different Gevrey orders can be distinguished.


The main aim in this work is to construct actual holomorphic solutions $X(t,z,\epsilon)$ of (\ref{e1}), (\ref{e2}) which are represented by the formal solution 
\begin{equation}\label{e78}
\hat{X}(t,z,\epsilon)=\sum_{\beta\ge0}H_{\beta}(t,z)\frac{\epsilon^{\beta}}{\beta!},
\end{equation}
where $H_{\beta}$ belongs to an adecquate space of functions. The solution is holomorphic in a domain of the form $\mathcal{T}\times \mathcal{U}\times\mathcal{E}$, where $\mathcal{T}$ and $\mathcal{E}$ are sectors of finite radius and vertex at the origin, and $\mathcal{U}$ is a neighborhood of the origin. In the asymptotic representation several Gevrey orders will appear.

The strategy followed is to study, for every fixed $\epsilon\in\mathcal{E}$, a singular Cauchy problem (see (\ref{e385}), (\ref{e389})) where $Y(t,z,\epsilon):=X(\epsilon^{-r}t,z,\epsilon)$ turns out to be its solution. Of course, the domain of definition of such a solution depends on the choice of $\epsilon\in\mathcal{E}$. More precisely, for every $\epsilon\in\mathcal{E}$ one finds a function
$$(T,z)\mapsto Y(T,z,\epsilon)=\sum_{\beta\ge0}Y_{\beta}(T,\epsilon)\frac{z^{\beta}}{\beta!}$$
defined in a sector of radius depending on $\epsilon$ and wide enough opening in the variable $T$ times a neighborhood of the origin (see Theorem~\ref{teo1}). Indeed, the function $T\mapsto Y_{\beta}(T,\epsilon)$ is constructed as the $m_{k}-$Laplace transform of $\tau\mapsto W_{\beta}(\tau,\epsilon)$ belonging to a well chosen Banach space (see Definition~\ref{defi1}). 

At this point, we have handled a slightly modified version of the classical Laplace transform which better fits our needs, and has already been used in other works in the framework of singularly perturbed Cauchy problems with vanishing initial data, such as~\cite{lama1}. 

It is worth noticing that some assumptions on the elements appearing on the equation of the singular Cauchy problem are made (see Assumption (D)) in order to be able to write the operators involved of some form. This idea is reproduced from~\cite{taya}. 

The coefficients of the formal power series 
$$W(\tau,z,\epsilon)=\sum_{\beta\ge0}W_{\beta}(\tau,\epsilon)\frac{z^{\beta}}{\beta!}$$ 
belong to some appropriate Banach space which depend on $\epsilon\in\mathcal{E}$; and $W(\tau,z,\epsilon)$ is constructed as the formal solution to the auxiliary Cauchy problem (\ref{e237}), (\ref{e238}) (see Proposition~\ref{prop2}). 

The solution $X(t,z,\epsilon)$ is written in the form
\begin{equation}\label{e94}
X(t,z,\epsilon)=\sum_{\beta\ge0}\int_{L_{\gamma}}W_{\beta}(u,\epsilon)e^{-\left(\frac{u}{\epsilon^rt}\right)^{k}}\frac{du}{u}\frac{z^{\beta}}{\beta!},
\end{equation}
where $L_{\gamma}=[0,\infty)e^{i\gamma}$ for some $\gamma\in[0,2\pi)$.

Regarding the singularities appearing, one realizes that the geometry of the problem is crucial when approaching the auxiliary and the initial problems. Indeed, the singularities in equation (\ref{e237}) come from the zeroes in the variable $\tau$ of the equations $(k\tau^k)^{s_2}+a_2=0$ and $\epsilon^{r_1-s_1rk}(k\tau^k)^{s_1}+a_1=0$.
The first equation provides fixed singularities which do not depend on $\epsilon$ whilst the second equation provides singularities that converge to the origin with $\epsilon$. The geometry associated to this phenomenon is described in Section~\ref{seccion1} and also in Assumption (B) in more detail. As a matter of fact, for every $\beta\ge0$, $\tau\mapsto W_{\beta}(\tau,\epsilon)$ is a holomorphic function defined in a neighborhood of the origin which can be extended along an infinite sector (common for every $\epsilon\in\mathcal{E}$). However, this initial neighborhood of 0 varies with $\epsilon$; all its complex numbers within a certain range of directions and modulus larger than a function of $\epsilon$ which tends to 0 with $|\epsilon|\to 0$ are being removed from it.

Regarding the asymptotic representation of the analytic solution we study problem (\ref{e1}), (\ref{e2}) with the perturbation parameter lying in different sectors $\mathcal{E}_{i}$, $i=1,...,\nu-1$, where $(\mathcal{E}_{i})_{1\le i\le \nu-1}$ provides a good covering at 0 (see Definition~\ref{defi3}). By means of a Ramis-Sibuya type theorem with two levels we were able to estimate the difference of two consecutive solutions by deforming the integration path of the $m_k-$Laplace transform in (\ref{e94}). This deformation is made accordingly with the geometry explained above so that, if some particular argument lies in between the integration path of two consecutive solutions, then the Gevrey order within the asymptotic representation is altered.

We should mention that a similar phenomenon of parametric multilevel Gevrey asymptotics has been observed recently by K. Suzuki and Y. Takei in \cite{suta} and Y. Takei in \cite{ta} for WKB solutions of the Schr\"{o}dinger equation
$$ \epsilon^{2}\psi''(z) = (z - \epsilon^{2}z^{2})\psi(z) $$
which possess 0 as fixed turning point and $z_{\epsilon}=\epsilon^{-2}$ as movable turning point. We stress the fact a resembling Ramis-Sibuya type theorem is used in this work.

As a consequence, there exists a common $\hat{X}$ for every $i=1,...,\nu-1$ of the form (\ref{e78}) which can be splitted in the form 
$$\hat{X}(t,z,\epsilon)=a(t,z,\epsilon)+\hat{X}^{1}(t,z,\epsilon)+\hat{X}^{2}(t,z,\epsilon),$$
where $a$ is a convergent series on some neighborhood of the origin, such that the solution $X_{i}(t,z,\epsilon)$ is given by
$$X_i(t,z,\epsilon)=a(t,z,\epsilon)+X_i^{1}(t,z,\epsilon)+X_i^{2}(t,z,\epsilon),$$
where $X_i^{j}$ admits $\hat{X}^{j}$ as its Gevrey asymptotic expansion in $\mathcal{E}_{i}$ of order $\hat{r}_{j}$ for $j=1,2$ (see Theorem~\ref{teopral}).

The layout of the work reads as follows.

In Section~\ref{seccion1}, we describe a parameter depending Banach space of holomorphic functions and describe some geometry associated to the domain of definition of the elements in such space. We also describe the behavior of the elements in it under certain operators. In Section~\ref{seccion3}, we study the formal solution of the auxiliary Cauchy problem (\ref{e237}), (\ref{e238}) with coefficients being elements in the Banach space described in the previous section. After recalling some definitions and properties on the $k-$Borel-Laplace summability procedure in Section~\ref{sec41}, we provide the solutions of a singular Cauchy problem (\ref{e385}), (\ref{e389}) which conform the support of the solution for the main problem in our work (\ref{e500}), (\ref{e504}). Finally, we estimate the difference of two solutions of the main problem in the intersection of their domain of definition in the perturbation parameter (see Theorem~\ref{teo2}) and obtain, by means of a Ramis-Sibuya theorem with two levels (see Section~\ref{secrs}), a formal solution and a decomposition of both the analytic and the formal solution of the problem in two terms so that each term in the formal solution represents the corresponding term in the analytic one under certain Gevrey type asymptotics (see Theorem~\ref{teopral}).

\section{Banach spaces functions with exponential decay}\label{seccion1}

Let $\rho_0>0$. We denote $D(0,\rho_0)$ the open disc in $\C$, centered at 0 and with radius $\rho_0$. For $d\in\R$, we consider an unbounded sector $\{z\in\C:|\arg(z)-d|<\delta_{1}\}$, for some $\delta_1>0$, which is denoted by $S_d$.

Let $\mathcal{E}$ be an open and bounded sector with vertex at the origin. We put 
$$\mathcal{E}=\left\{\epsilon\in\C: |\epsilon|<r_{\mathcal{E}},\theta_{1,\mathcal{E}}<\arg(\epsilon)<\theta_{2,\mathcal{E}}\right\},$$ 
for some $r_{\mathcal{E}}>0$ and $0\le\theta_{1,\mathcal{E}}<\theta_{2,\mathcal{E}}<2\pi$. 

Let $\delta_2>0$. For every $\epsilon\in\mathcal{E}$, we consider the open domain $\Omega(\epsilon):=(S_{d}\cup D(0,\rho_0))\setminus \Omega_{1}(\epsilon)$, where $\Omega_{1}(\epsilon)$ turns out to be a finite collection of sets of the form $\{\tau\in\C: |\tau|>\rho(|\epsilon|),|\arg(\tau)-d_{\mathcal{E}}|<\delta_{2}\}$, where $0\le d_{\mathcal{E}}<2\pi$ is a real number depending on $\mathcal{E}$, and $x\in(0,r_{\mathcal{E}})\mapsto \rho(x)$ is a monotone increasing function with $\rho(x)\to0$ when $x\to0$. We give more technical details on the construction of this set afterwards, in Assumption (B.2), not to interrupt the reasonings. We only remark now that $S_d$ and $\Omega_1(\epsilon)$ are such that $S_d\cap \Omega_1(\epsilon)=\emptyset$ for every $\epsilon\in\mathcal{E}$.

Throughout this work, $b$ and $\sigma$ are fixed positive real numbers with $b>1$, whilst $k\ge2$ stands for a fixed integer. 

\begin{defin}\label{defi1}
Let $\epsilon\in\mathcal{E}$ and $r\in\mathbb{Q}$, $r>0$.
 
For every $\beta\ge0$, we consider the vector space $F_{\beta,\epsilon,\Omega(\epsilon)}$ of holomorphic functions $\tau\mapsto h(\tau,\epsilon)$ defined in $\Omega(\epsilon)$ such that
$$\left\|h(\tau,\epsilon) \right\|_{\beta,\epsilon,\Omega(\epsilon)}:=\sup_{\tau\in\Omega(\epsilon)}\left\{\frac{1+\left|\frac{\tau}{\epsilon^{r}}\right|^{2k}}{\left|\frac{\tau}{\epsilon^{r}}\right|}\exp\left(-\sigma r_{b}(\beta)\left|\frac{\tau}{\epsilon^{r}}\right|^{k}\right)\left|h(\tau,\epsilon)\right|\right\}<\infty,$$
where $r_{b}(\beta)=\sum_{n=0}^{\beta}\frac{1}{(n+1)^b}$. One can check that the pair $(F_{\beta,\epsilon,\Omega(\epsilon)},\left\|\cdot\right\|_{\beta,\epsilon,\Omega(\epsilon)})$ is a Banach space.
\end{defin}

\textbf{Assumption (A):}
Let $a_2\in\C$ with $a_2\neq0$, and let $s_2$ be a positive integer. We assume:
\begin{enumerate}
\item[$(A.1)$]
$$\arg(\tau)\neq\frac{\pi (2j+1)+\arg(a_2)}{ks_2},\quad j=0,...,ks_2-1,$$
for every $\tau\in\overline{S_{d}}\setminus\{0\}$.
\item[$(A.2)$] $\rho_0<\frac{|a_{2}|^{1/(ks_2)}}{2k^{1/k}}$.
\end{enumerate}

The aim of the previous assumption is to avoid the roots of the function $\tau\mapsto (k\tau^{k})^{s_2}+a_2$ when $\tau$ lies among the elements in $\Omega(\epsilon)$ for every $\epsilon\in\mathcal{E}$. This statement is clarified in the following

\begin{lemma}\label{lema1}
Under Assumption (A), there exists a constant $C_1>0$ (which only depends on $k$, $s_2$, $a_2$) such that
$$\left|\frac{1}{(k\tau^{k})^{s_2}+a_2}\right|\le C_1,$$
for every $\epsilon\in\mathcal{E}$ and every $\tau\in\Omega(\epsilon)$.
\end{lemma}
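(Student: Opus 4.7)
The plan is to bound $|P(\tau)|$ from below uniformly on $\Omega(\epsilon)$, where $P(\tau):=(k\tau^{k})^{s_{2}}+a_{2}$. First I would locate the zeros of $P$ explicitly: solving $(k\tau^{k})^{s_{2}}=-a_{2}$ shows that $P$ has exactly $ks_{2}$ simple roots, all of common modulus $r^{\ast}:=|a_{2}|^{1/(ks_{2})}/k^{1/k}$, whose arguments are precisely the directions $\frac{\arg(a_{2})+\pi(2j+1)}{ks_{2}}$, $j=0,\dots,ks_{2}-1$, that are forbidden in Assumption (A.1).

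Next I would use the inclusion $\Omega(\epsilon)\subset K:=\overline{D(0,\rho_{0})}\cup\overline{S_{d}}$, valid for every $\epsilon\in\mathcal{E}$, and observe that $K$ is closed and does not depend on $\epsilon$. Assumption (A.2) yields $r^{\ast}>2\rho_{0}$, so no zero of $P$ lies in $\overline{D(0,\rho_{0})}$; Assumption (A.1) tells us that no zero of $P$ has its argument in $\overline{S_{d}}$, so no zero lies in $\overline{S_{d}}$ either. Therefore $P$ is continuous and non-vanishing on $K$.

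To extract a uniform lower bound, I would use the elementary estimate $|P(\tau)|\ge k^{s_{2}}|\tau|^{ks_{2}}-|a_{2}|$ to produce an $R>0$, depending only on $k,s_{2},a_{2}$, such that $|P(\tau)|\ge|a_{2}|$ whenever $|\tau|\ge R$. On the compact subset $K\cap\overline{D(0,R)}$, the continuous and non-vanishing function $|P|$ attains a positive minimum $m>0$. Setting $C_{1}:=1/\min(m,|a_{2}|)$ then gives the desired bound, with $C_{1}$ depending only on $k,s_{2},a_{2}$.

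The only genuine obstacle is the geometric verification that the explicit list of zeros of $P$ avoids the set $K$; once that follows from Assumption (A), the rest is a standard compactness-plus-growth argument, and it is precisely because $K\supset\Omega(\epsilon)$ is independent of $\epsilon$ that $C_{1}$ can be chosen uniformly in $\epsilon\in\mathcal{E}$, without ever having to interact with the more delicate $\epsilon$-dependent excision $\Omega_{1}(\epsilon)$.
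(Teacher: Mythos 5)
Your proof is correct. It rests on the same two geometric observations as the paper's: Assumption (A.2) keeps the roots (all of modulus $|a_2|^{1/(ks_2)}/k^{1/k}$) out of the disc, Assumption (A.1) keeps their arguments out of $\overline{S_d}$, and the uniformity in $\epsilon$ comes precisely from the inclusion $\Omega(\epsilon)\subset \overline{D(0,\rho_0)}\cup\overline{S_d}$, a set independent of $\epsilon$. Where you differ is in how you convert ``non-vanishing on a closed unbounded set'' into a uniform lower bound. The paper writes $1/((k\tau^k)^{s_2}+a_2)$ as an explicit partial-fraction sum $\sum_j A_j/(\tau-\tau_j)$ over the $ks_2$ simple roots $\tau_j$ and bounds each term by a uniform lower bound on $|\tau-\tau_j|$; you instead combine the reverse triangle inequality $|(k\tau^k)^{s_2}+a_2|\ge k^{s_2}|\tau|^{ks_2}-|a_2|$ at infinity with a compactness argument on the bounded part $K\cap\overline{D(0,R)}$. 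Your route is slightly more elementary (no residues $A_j$ to compute, no distances to individual roots to estimate), while the paper's partial-fraction decomposition is the template it reuses in Lemma~\ref{lema2}, where the roots move with $\epsilon$ and an explicit rescaling of the distance is what makes the $\epsilon$-uniformity visible; your compactness argument would not transfer as directly to that setting. In both arguments the constant obtained in fact also depends on $\rho_0$ and on the sector $S_d$, which is harmless since these are fixed once and for all in the setup.
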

\begin{proof}
This proof follows analogous steps as the one of Lemma 1 in~\cite{lamasa1}. Let $\epsilon\in\mathcal{E}$. 

On the one hand, it is direct to check from Assumption (A.2) that any root of $\tau\mapsto (k\tau^{k})^{s_2}+a_2$ keeps positive distance to $D(0,\rho_0)$. This entails this distance provides an upper bound when substituting $D(0,\rho_0)$ by $D(0,\rho_0)\setminus\Omega_{1}(\epsilon)$ for every $\epsilon\in\mathcal{E}$. 

On the other hand, one has that
$$\frac{1}{(k\tau^{k})^{s_2}+a_2}=\sum_{j=0}^{ks_2-1}\frac{A_{j}}{\tau-\frac{a_2^{1/(ks_2)}e^{i\pi\left(\frac{2j+1}{ks_2}\right)}}{k^{1/k}}},$$
where
$$A_{j}=\frac{1}{a_2ks_2}e^{-i\pi\left(\frac{ks_2-1}{ks_2}\right)(2j+1)}\frac{a_2^{1/(ks_2)}}{k^{1/k}},$$
for every $j=0,...,ks_2-1$. Taking into account Assumption (A.1), there exists a constant $C_{11}>0$, which does not depend on $\epsilon\in\mathcal{E}$, satisfying
$$\left|\tau-\frac{a_{2}^{1/(ks_2)}e^{i\pi\left(\frac{2j+1}{ks_2}\right)}}{k^{1/k}}\right|\ge C_{11},$$
for every $\tau \in S_{d}$ and all $j=0,...,ks_2-1$. 

Both statements yield to the conclusion. 
\end{proof}

We now give more detail on the construction of the set $\Omega(\epsilon)$ for each $\epsilon\in\mathcal{E}$.

\textbf{Assumption (B):} Let $a_1\in\C$ with $a_1\neq0$ . Let $r_1$ be a nonnegative integer, and $r_2,s_1$ positive integers. We assume:
\begin{enumerate} 
\item[$(B.1)$] $s_1r_2-s_2r_1>0$.

\item[$(B.2)$]For every $\epsilon\in\mathcal{E}$, the set $\Omega_{1}(\epsilon)$ is constructed as follows:
$$\Omega_{1}(\epsilon):=\bigcup_{j=0}^{ks_1-1} \left\{\tau\in\C: |\tau|\ge \rho(|\epsilon|),|\arg(\tau)-d_{\mathcal{E},j}|<\delta_2 \right\},$$
where 
$$\rho(x)=\frac{|a_1|^{1/(k s_1)}x^{\frac{s_1r_2-s_2r_1}{s_1s_2k}}}{2k^{1/k}},\qquad x\ge0,$$
\begin{equation}\label{e116}
d_{\mathcal{E},j}=\frac{1}{ks_1}\left(\pi(2j+1)+\arg(a_1)+\frac{s_1r_2-s_2r_1}{s_2}\left(\frac{\theta_{1,\mathcal{E}}+\theta_{2,\mathcal{E}}}{2}\right)\right),
\end{equation}
for every $j=0,...,ks_1-1$,
and
\begin{equation}\label{e117}
\delta_{2}>\frac{s_1r_2-s_2r_1}{ks_1s_2}\left(\theta_{2,\mathcal{E}}-\theta_{1,\mathcal{E}}\right).
\end{equation}
\end{enumerate}

Assumption (B) is concerned with the nature of the roots of the function 
\begin{equation}\label{e121}
\tau\mapsto \epsilon^{r_1-s_1rk}(k\tau^{k})^{s_1}+a_1,
\end{equation}
with 
\begin{equation}\label{e131}
r:=\frac{r_2}{s_2k}.
\end{equation}
The dynamics of the singularities involved in the equation to study is related to the first item in the previous assumption. More precisely, these tend to 0 with the perturbation parameter $\epsilon$. The second enunciate in Assumption (B) is concerned with the distance of $\Omega_1(\epsilon)$ to the roots of (\ref{e121}). Indeed, one can choose a positive lower bound for this distance which does not depend on $\epsilon\in\mathcal{E}$.



\begin{lemma}\label{lema2}
Let $\epsilon\in\mathcal{E}$. Under Assumption (B), there exists a constant $C_2>0$ (which only depends on $k, s_1,s_2,r_1,r_2,a_1$ and which is independent of $\epsilon\in\mathcal{E}$) such that
$$\left|\frac{1}{\epsilon^{r_1-s_1rk}(k\tau^{k})^{s_1}+a_1}\right|\le C_2,$$
for every $\tau\in\Omega(\epsilon)$.
\end{lemma}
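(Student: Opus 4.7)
The plan is to mirror the proof of Lemma~\ref{lema1} with the added complication that the zeros of the denominator now depend on $\epsilon$; Assumption (B) is tailored precisely so that the $\epsilon$-dependence of these zeros matches the $\epsilon$-dependence of the excluded set $\Omega_{1}(\epsilon)$.

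First I would locate the $ks_{1}$ simple zeros of $\tau\mapsto \epsilon^{r_{1}-s_{1}rk}(k\tau^{k})^{s_{1}}+a_{1}$. Using (\ref{e131}) one has $r_{1}-s_{1}rk = -(s_{1}r_{2}-s_{2}r_{1})/s_{2}$, which is negative by (B.1); writing $m := (s_{1}r_{2}-s_{2}r_{1})/(s_{1}s_{2}k)$, a direct calculation gives
$$\tau_{j}(\epsilon)=\frac{|a_{1}|^{1/(ks_{1})}}{k^{1/k}}\,|\epsilon|^{m}\,\exp\bigl(i\theta_{j}(\epsilon)\bigr),\qquad \theta_{j}(\epsilon)=\frac{\pi(2j+1)+\arg(a_{1})}{ks_{1}}+m\,\arg(\epsilon),$$
for $j=0,\ldots,ks_{1}-1$. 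Comparing with (\ref{e116}) I would observe that $|\tau_{j}(\epsilon)|$ equals exactly $2\rho(|\epsilon|)$ and that
$$\theta_{j}(\epsilon)-d_{\mathcal{E},j}=m\bigl(\arg(\epsilon)-\tfrac{\theta_{1,\mathcal{E}}+\theta_{2,\mathcal{E}}}{2}\bigr).$$
Together with (\ref{e117}) this yields $|\theta_{j}(\epsilon)-d_{\mathcal{E},j}|<\delta_{2}/2$ uniformly in $\epsilon\in\mathcal{E}$; equivalently, each $\tau_{j}(\epsilon)$ lies strictly inside one of the excluded wedges of $\Omega_{1}(\epsilon)$.

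I would then decompose in partial fractions
$$\frac{1}{\epsilon^{r_{1}-s_{1}rk}(k\tau^{k})^{s_{1}}+a_{1}} = \sum_{j=0}^{ks_{1}-1}\frac{A_{j}(\epsilon)}{\tau-\tau_{j}(\epsilon)},\qquad A_{j}(\epsilon)=-\frac{\tau_{j}(\epsilon)}{ks_{1}a_{1}},$$
where the formula for $A_{j}(\epsilon)$ follows from the identity $\tau_{j}(\epsilon)^{ks_{1}}=-a_{1}/(\epsilon^{r_{1}-s_{1}rk}k^{s_{1}})$ evaluated in the derivative of the denominator. The remaining task is to bound the ratio $|\tau_{j}(\epsilon)|/|\tau-\tau_{j}(\epsilon)|$ uniformly for $\tau\in\Omega(\epsilon)$.

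This I would handle by two cases. If $|\tau|\le \rho(|\epsilon|)$, the triangle inequality together with $|\tau_{j}(\epsilon)|=2\rho(|\epsilon|)$ gives $|\tau-\tau_{j}(\epsilon)|\ge \rho(|\epsilon|)$, so the ratio is at most $2$. If $|\tau|>\rho(|\epsilon|)$, then $\tau\notin\Omega_{1}(\epsilon)$ forces $|\arg(\tau)-d_{\mathcal{E},j}|\ge \delta_{2}$, and by the previous step the angular separation $\psi_{j}:=\arg(\tau)-\theta_{j}(\epsilon)$ satisfies $|\psi_{j}|>\delta_{2}/2$. Writing $|\tau-\tau_{j}(\epsilon)|^{2}=|\tau_{j}(\epsilon)|^{2}\bigl[(|\tau|/|\tau_{j}(\epsilon)|-\cos\psi_{j})^{2}+\sin^{2}\psi_{j}\bigr]$ and distinguishing $\cos\psi_{j}\ge 0$ from $\cos\psi_{j}<0$ then yields $|\tau_{j}(\epsilon)|/|\tau-\tau_{j}(\epsilon)|\le \max(1,1/\sin(\delta_{2}/2))$. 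Summing the resulting estimates over $j$ produces the claimed constant $C_{2}$. The main obstacle is the $\epsilon$-bookkeeping: both the roots $\tau_{j}(\epsilon)$ and the removed wedges shrink toward the origin as $|\epsilon|\to 0$, and the uniform bound rests crucially on the fact that they shrink at the same rate $|\epsilon|^{m}$, so this scale cancels from the final ratio.
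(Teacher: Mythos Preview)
Your proposal is correct and follows essentially the same route as the paper's proof: a partial-fraction decomposition of $1/(\epsilon^{r_1-s_1rk}(k\tau^k)^{s_1}+a_1)$ followed by a uniform lower bound on $|\tau-\tau_j(\epsilon)|$ exploiting that the roots and the excised wedges $\Omega_1(\epsilon)$ scale identically in $|\epsilon|^{m}$. The only cosmetic difference is that the paper factors out $\epsilon^{m}$ explicitly, reducing to a fixed target point $k^{-1/k}e^{i\pi(2j+1)/(ks_1)}a_1^{1/(ks_1)}$ and then argues geometrically about the boundary of the rescaled $\Omega_1(\epsilon)$, whereas you keep the $\epsilon$-dependence and bound the scale-invariant ratio $|\tau_j(\epsilon)|/|\tau-\tau_j(\epsilon)|$ via a clean two-case split on $|\tau|\lessgtr\rho(|\epsilon|)$; the two arguments are equivalent.
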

\begin{proof}
The proof of this result follows analogous steps as the corresponding one of Lemma~\ref{lema1}. Let $\epsilon\in\mathcal{E}$. One can write
$$\frac{1}{\epsilon^{r_1-s_1rk}(k\tau^{k})^{s_1}+a_1}=\sum_{j=0}^{ks_{1}-1}\frac{B_{j}(\epsilon)}{\tau-\frac{e^{i\pi\left(\frac{2j+1}{ks_1}\right)}a_1^{1/(ks_1)}}{k^{1/k}\epsilon^{\frac{r_1-s_1rk}{ks_1}}}},$$
where 
$$B_{j}(\epsilon)=\frac{1}{a_1ks_1}e^{-i\pi\left(\frac{ks_1-1}{ks_1}\right)(2j+1)}\frac{a_1^{1/(k s_1)}}{k^{1/k}}\epsilon^{\frac{s_1r_2-s_2r_1}{ks_1s_2}},$$
for every $j=0,...,ks_1-1$. Indeed, for all $j=0,...,ks_1-1$ one has
$$\frac{B_{j}(\epsilon)}{\tau-\frac{e^{i\pi\left(\frac{2j+1}{ks_1}\right)}a_1^{1/(ks_1)}}{k^{1/k}\epsilon^{\frac{r_1-s_1rk}{ks_1}}}}=\frac{a_1^{1/(ks_1)}e^{-i\pi\left(\frac{ks_1-1}{ks_1}\right)(2j+1)}}{k^{1/k}ks_1a_1(\tau\epsilon^{-\frac{s_1r_2-s_2r_1}{ks_1s_2}}-k^{-1/k}e^{i\pi\left(\frac{2j+1}{ks_1}\right)}a_1^{1/(ks_1)})}.$$
At this point, it is sufficient to prove that the distance from $\tau\epsilon^{-\frac{s_1r_2-s_2r_1}{ks_1s_2}}$ to $k^{-1/k}e^{i\pi\left(\frac{2j+1}{ks_1}\right)}a_1^{1/(ks_1)}$ is upper bounded by a constant for every $\tau\in\Omega(\epsilon)$, which does not depend on $\epsilon$. Let $\tau(\epsilon)\in\C$ be satisfying
\begin{equation}\label{e163}
\tau(\epsilon)\epsilon^{-\frac{s_1r_2-s_2r_1}{ks_1s_2}}-k^{-1/k}e^{i\pi\left(\frac{2j+1}{ks_1}\right)}a_1^{1/(ks_1)}=0.
\end{equation}
Regarding the construction of $\Omega_1(\epsilon)$, the distance from $\tau(\epsilon)$ to $\Omega_1(\epsilon)$ might be attained at the complex points in $\overline{\Omega_1(\epsilon)}$ with arguments given by $d_{\mathcal{E},j}\pm\delta_2$ or at the points in $\overline{\Omega_{1}(\epsilon)}$ of modulus equal to $\rho(|\epsilon|)$. In the first case, this distance is positive and does not depend on $\epsilon$ as it can be deduced from (\ref{e116}) and (\ref{e117}). In the second case, the minimum distance is attained at $\tau(\epsilon)/2$. Taking into account (\ref{e163}) one derives that
$$ \left|\frac{\tau(\epsilon)}{2}\epsilon^{-\frac{s_1r_2-s_2r_1}{ks_1s_2}}-k^{-1/k}e^{i\pi\left(\frac{2j+1}{ks_1}\right)}a_1^{1/(ks_1)}\right|=\left|\frac{-a_1^{1/(ks_1)}e^{i\pi\left(\frac{2j+1}{ks_1}\right)}}{2k^{1/k}}\right|=\frac{|a_1|^{1/(k s_1)}}{2k^{1/k}}>0,
$$
which does not depend on $\epsilon$. The conclusion is achieved from this point.
\end{proof}

Assumption (B.1) is substituted by the incoming Assumption (B.1)'. It deals with the existence of attainable directions $d\in\R$ in such a way that $S_d\cap \left(\cup_{\epsilon\in\mathcal{E}}\Omega_1(\epsilon)\right)=\emptyset$. Indeed, for this purpose one aims that
$$\arg(\tau)\neq\frac{1}{ks_1}\left[\pi(2j+1)+\arg(a_1)+\frac{s_1r_2-s_2r_1}{s_2}\arg(\epsilon)\right],$$
for any $j=0,...,ks_1-1$, $\epsilon\in\mathcal{E}$ and all $\tau\in \overline{S_d}\setminus\{0\}$.

This entails that
\begin{equation}\label{e152}
ks_1\arg(\tau)\notin\left(\pi(2j+1)+\arg(a_1)+\frac{s_1r_2-s_2r_1}{s_2}\theta_{1,\mathcal{E}},\pi(2j+1)+\arg(a_1)+\frac{s_1r_2-s_2r_1}{s_2}\theta_{2,\mathcal{E}}\right),
\end{equation}
for any $j=0,...,ks_1-1$. 
The overlapping of two consecutive sectors in $\Omega_{1}(\epsilon)$ for some $\epsilon\in\mathcal{E}$ would imply such $d$ could not exist. Regarding (\ref{e152}), the existence of possible choices for direction $d$ implies undertaking the following

\textbf{Assumption (C):}
$$\theta_{2,\mathcal{E}}-\theta_{1,\mathcal{E}}<\frac{2\pi s_2}{s_1r_2-s_2r_1}.$$
which implies

\textbf{Assumption (B.1)':} $$s_1r_2-s_2r_1>s_2$$  

>From now on, we substitute Assumption (B.1) by Assumption (B.1)', which is more restrictive.

The next lemmas are devoted to the behavior of the elements in the latter Banach space introduced in Definition~\ref{defi1} under some operators, and its continuity. 

\begin{lemma}
Let $\epsilon\in\mathcal{E}$ and $\beta$ be a nonnegative integer. For every bounded continuous function $g(\tau)$ on $\Omega(\epsilon)$ such that $M_g:=\sup_{\tau\in\Omega(\epsilon)}|g(\tau)|$ does not depend on $\epsilon\in\mathcal{E}$, then 
$$\left\|g(\tau)h(\tau,\epsilon)\right\|_{\beta,\epsilon,\Omega(\epsilon)}\le M_{g}\left\|h(\tau,\epsilon)\right\|_{\beta,\epsilon,\Omega(\epsilon)},$$
for every $h\in F_{\beta,\epsilon,\Omega(\epsilon)}$.
\end{lemma}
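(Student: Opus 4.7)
The plan is entirely direct: unpack the definition of the norm $\|\cdot\|_{\beta,\epsilon,\Omega(\epsilon)}$ and push the pointwise bound $|g(\tau)|\le M_g$ inside the supremum. There is no geometric or analytic subtlety here, just the triangle-style estimate for a weighted sup-norm under multiplication by a bounded function.

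Concretely, I would fix $\epsilon\in\mathcal{E}$, a nonnegative integer $\beta$, and $h\in F_{\beta,\epsilon,\Omega(\epsilon)}$. For each $\tau\in\Omega(\epsilon)$, the integrand defining the norm of $gh$ factors as
\[
\frac{1+\left|\tfrac{\tau}{\epsilon^r}\right|^{2k}}{\left|\tfrac{\tau}{\epsilon^r}\right|}\,\exp\!\left(-\sigma r_b(\beta)\left|\tfrac{\tau}{\epsilon^r}\right|^{k}\right)|g(\tau)|\,|h(\tau,\epsilon)|,
\]
and the hypothesis $|g(\tau)|\le M_g$ for all $\tau\in\Omega(\epsilon)$ yields the pointwise majorization
\[
\frac{1+\left|\tfrac{\tau}{\epsilon^r}\right|^{2k}}{\left|\tfrac{\tau}{\epsilon^r}\right|}\,\exp\!\left(-\sigma r_b(\beta)\left|\tfrac{\tau}{\epsilon^r}\right|^{k}\right)|g(\tau)h(\tau,\epsilon)|\;\le\;M_g\cdot\frac{1+\left|\tfrac{\tau}{\epsilon^r}\right|^{2k}}{\left|\tfrac{\tau}{\epsilon^r}\right|}\,\exp\!\left(-\sigma r_b(\beta)\left|\tfrac{\tau}{\epsilon^r}\right|^{k}\right)|h(\tau,\epsilon)|.
\]
Taking the supremum over $\tau\in\Omega(\epsilon)$ on both sides gives the stated inequality.

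Since $g$ is continuous and bounded on $\Omega(\epsilon)$ and $h$ is holomorphic there, the product $gh$ is at least continuous and bounded in the relevant weighted sense, so membership in $F_{\beta,\epsilon,\Omega(\epsilon)}$ (as a vector space of holomorphic functions) would additionally require $g$ itself to be holomorphic if one cared about this ambient structure; for the present norm estimate only the pointwise bound matters. The assumption that $M_g$ is independent of $\epsilon\in\mathcal{E}$ is not used in this single inequality but will be essential in later applications, where one needs uniformity in $\epsilon$. I do not anticipate any real obstacle: the proof is a one-step pointwise majorization followed by passing to the supremum.
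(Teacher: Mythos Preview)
Your proposal is correct and matches the paper's approach: the paper simply states that the inequality is a direct consequence of the definition of the space $F_{\beta,\epsilon,\Omega(\epsilon)}$, which is exactly the pointwise majorization you spell out. Your remarks about holomorphy of $g$ and the role of the $\epsilon$-uniformity of $M_g$ are accurate side observations not needed for the bare norm inequality.
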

\begin{proof}
It s a direct consecuence of the definition of the space  $F_{\beta,\epsilon,\Omega(\epsilon)}$.
\end{proof}

\begin{prop}\label{prop1}
Let $\epsilon\in\mathcal{E}$ and $r\in\mathbb{Q}$, $r>0$. We consider real numbers $\nu\ge0$ and $\xi\ge-1$. Let $S\ge1$ be a positive integer, $r$ a positive rational number, and let $\alpha<\beta$ be nonnegative integers. Then, there exists a constant $C_3>0$ (depending on $\alpha$, $S$, $\beta$, $\xi$, $\nu$  and which does not depend on $\epsilon$) with
$$\left\|\tau^{k}\int_{0}^{\tau^{k}}(\tau^{k}-s)^{\nu}s^{\xi}f(s^{1/k},\epsilon)ds \right\|_{\beta,\epsilon,\Omega(\epsilon)}\le C_3|\epsilon|^{rk(2+\nu+\xi)}\left(\frac{(\beta+1)^b}{\beta-\alpha}\right)^{\nu+\xi+3}\left\|f(\tau,\epsilon)\right\|_{\alpha,\epsilon,\Omega(\epsilon)},$$ 
for every $f\in F_{\alpha,\epsilon,\Omega(\epsilon)}$.
\end{prop}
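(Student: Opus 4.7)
The plan is to reduce the estimate to a one-dimensional supremum in the dimensionless variable $y:=|\tau|^{k}/|\epsilon|^{rk}$. First I parametrise the segment of integration by $s=\tau^{k}v^{k}$ with $v\in[0,1]$; this is admissible because $\Omega(\epsilon)$ is star-shaped with respect to the origin (the union $S_{d}\cup D(0,\rho_{0})$ is clearly star-shaped at $0$, and $\Omega_{1}(\epsilon)$ is a finite union of truncated half-infinite sectors whose complement is stable under the radial contraction $\tau\mapsto v\tau$, $v\in[0,1]$). The substitution yields
$$\tau^{k}\int_{0}^{\tau^{k}}(\tau^{k}-s)^{\nu}s^{\xi}f(s^{1/k},\epsilon)\,ds = k\,\tau^{k(2+\nu+\xi)}\int_{0}^{1}(1-v^{k})^{\nu}v^{k(\xi+1)-1}f(\tau v,\epsilon)\,dv.$$
Inserting the pointwise majorant $|f(\tau v,\epsilon)|\le\|f\|_{\alpha,\epsilon,\Omega(\epsilon)}\,v\,y^{1/k}(1+v^{2k}y^{2})^{-1}\exp(\sigma r_{b}(\alpha)v^{k}y)$ coming from the $\alpha$-norm, pulling out $|\tau|^{k(2+\nu+\xi)}=y^{2+\nu+\xi}|\epsilon|^{rk(2+\nu+\xi)}$ and multiplying by the $\beta$-weight $(1+y^{2})y^{-1/k}\exp(-\sigma r_{b}(\beta)y)$, the desired inequality reduces to bounding $\sup_{y\ge 0}M(y)$ for
$$M(y):=(1+y^{2})y^{2+\nu+\xi}\exp(-\sigma r_{b}(\beta)y)\int_{0}^{1}(1-v^{k})^{\nu}v^{k\xi+k}\frac{\exp(\sigma r_{b}(\alpha)v^{k}y)}{1+v^{2k}y^{2}}\,dv.$$

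The decisive step is the exponential identity
$$\exp\!\bigl(-\sigma r_{b}(\beta)y+\sigma r_{b}(\alpha)v^{k}y\bigr)=\exp(-\sigma\delta y)\exp\!\bigl(-\sigma r_{b}(\alpha)(1-v^{k})y\bigr),\qquad \delta:=r_{b}(\beta)-r_{b}(\alpha),$$
which extracts a global exponential decay $\exp(-\sigma\delta y)$ while leaving inside the integrand an exponential that decays in $(1-v^{k})$. After the change of variables $u=1-v^{k}$ and the trivial bound $(1+(1-u)^{2}y^{2})^{-1}\le 1$, I obtain
$$M(y)\le \tfrac{1}{k}(1+y^{2})y^{2+\nu+\xi}\exp(-\sigma\delta y)\int_{0}^{1}u^{\nu}(1-u)^{\xi+1/k}\exp(-\sigma r_{b}(\alpha)uy)\,du.$$
I split the $u$-integral at $u=1/2$: on $[0,1/2]$ the factor $(1-u)^{\xi+1/k}$ is a bounded constant, and extending $u$ to $[0,\infty)$ gives a Gamma-type majorant $\le C_{\nu,\xi,k}\,y^{-(\nu+1)}$; on $[1/2,1]$ the exponential is bounded by $\exp(-\sigma r_{b}(\alpha)y/2)$, while $\int_{1/2}^{1}u^{\nu}(1-u)^{\xi+1/k}du$ is a finite Beta integral thanks to $\xi+1/k+1>0$ (which holds since $\xi\ge-1$ and $k\ge 2$).

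Combining these contributions gives
$$M(y)\le C_{1}(1+y^{2})y^{1+\xi}\exp(-\sigma\delta y)+C_{2}(1+y^{2})y^{2+\nu+\xi}\exp\!\bigl(-\sigma(\delta+r_{b}(\alpha)/2)y\bigr).$$
The second summand is uniformly bounded in $y$ because $\delta+r_{b}(\alpha)/2\ge r_{b}(0)/2=1/2$, whereas the first is controlled by the standard inequality $\sup_{y\ge 0}y^{n}e^{-cy}=(n/ec)^{n}$, producing a bound of order $\delta^{-(3+\xi)}$; since $\delta\le r_{b}(\beta)\le \zeta(b)<\infty$ (as $b>1$), the factor $\delta^{-\nu}$ needed to pass from $\delta^{-(3+\xi)}$ to $\delta^{-(\nu+\xi+3)}$ is absorbed into the constant. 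The proof concludes by invoking the elementary inequality
$$\delta=\sum_{n=\alpha+1}^{\beta}\frac{1}{(n+1)^{b}}\ \ge\ \frac{\beta-\alpha}{(\beta+1)^{b}},$$
which converts $\delta^{-(\nu+\xi+3)}$ into the claimed factor $\bigl((\beta+1)^{b}/(\beta-\alpha)\bigr)^{\nu+\xi+3}$.

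The main obstacle I foresee is precisely the exponential rewriting in the second paragraph: the naive bound $\exp(\sigma r_{b}(\alpha)v^{k}y)\le\exp(\sigma r_{b}(\alpha)y)$ would leave a trivial $v$-integral that loses the crucial Gamma-type gain $y^{-(\nu+1)}$, and the final exponent of $1/\delta$ would come out to $\nu+\xi+4$ instead of $\nu+\xi+3$. A secondary issue is the possibly negative exponent of $(1-u)^{\xi+1/k}$ near $u=1$ when $\xi$ is close to $-1$, handled by the splitting at $u=1/2$ as described.
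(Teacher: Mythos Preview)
Your argument is correct and reaches the stated bound, but it takes a genuinely different path from the paper's own proof. The paper actually \emph{does} use what you call the naive bound: after parametrising the segment it replaces $\exp(\sigma r_{b}(\alpha)h)$ by $\exp(\sigma r_{b}(\alpha)x)$ on the whole range $h\in[0,x]$ (in your variables, $\exp(\sigma r_{b}(\alpha)v^{k}y)\le\exp(\sigma r_{b}(\alpha)y)$). The reason this does \emph{not} lose a power of $\delta$, contrary to your anticipation, is that the paper keeps the factor $(1+h^{2})^{-1}$ coming from the $\alpha$-norm inside the $h$-integral---precisely the factor $(1+v^{2k}y^{2})^{-1}$ that you discard. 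After the crude estimate $(x-h)^{\nu}\le x^{\nu}$ one is left with
\[
B_{2}(x)=(1+x^{2})\,e^{-\sigma\delta x}\,x^{\nu+1}\int_{0}^{x}\frac{h^{\xi}}{1+h^{2}}\,dh,
\]
and for large $x$ the integral is of order at most $x^{\max(\xi-1,0)}$ rather than $x^{\xi+1}$; this saving of (up to) two powers of $x$ is exactly what brings the exponent down from $\nu+\xi+4$ to $\nu+\xi+3$. Your exponential rewriting $e^{-\sigma r_{b}(\beta)y+\sigma r_{b}(\alpha)v^{k}y}=e^{-\sigma\delta y}e^{-\sigma r_{b}(\alpha)(1-v^{k})y}$ achieves the same saving from a different source---the Gamma-type gain $y^{-(\nu+1)}$ on the piece $u\in[0,1/2]$---and even delivers a slightly sharper intermediate estimate ($\delta^{-(\xi+3)}$ for the dominant term), which you then relax to match the statement. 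Your route also handles the borderline values $\xi$ close to $-1$ more transparently via the splitting at $u=1/2$, whereas the paper's line ``from standard calculations one arrives at $B_{2}(x)\le C_{31}x^{\nu+\xi+3}e^{-\sigma\delta x}$'' is somewhat delicate there (it really relies on $\int_{0}^{x}h^{\xi}(1+h^{2})^{-1}dh$ staying bounded for $\xi<1$, which gives $\delta^{-(\nu+3)}$ rather than $\delta^{-(\nu+\xi+3)}$ when $\xi<0$; this is harmless in the paper's applications, where $\xi\ge 0$). Both arguments conclude identically via $\delta=r_{b}(\beta)-r_{b}(\alpha)\ge(\beta-\alpha)/(\beta+1)^{b}$.
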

\begin{proof}
Let $f\in F_{\alpha,\epsilon,\Omega(\epsilon)}$. For every $\tau\in\Omega(\epsilon)$, the segment $[0,\tau^{k}]$ is contained in $\Omega(\epsilon)$ for it is a star domain with respect to 0. By definition, we have
$$\left\|\tau^{k}\int_{0}^{\tau^{k}}(\tau^{k}-s)^{\nu}s^{\xi}f(s^{1/k},\epsilon)ds \right\|_{\beta,\epsilon,\Omega(\epsilon)}$$
$$=\sup_{\tau\in\Omega(\epsilon)}\left\{\frac{1+\left|\frac{\tau}{\epsilon^{r}}\right|^{2k}}{\left|\frac{\tau}{\epsilon^{r}}\right|}\exp\left(-\sigma r_{b}(\beta)\left|\frac{\tau}{\epsilon^{r}}\right|^{k}\right)|\tau|^{k}\left|\int_{0}^{\tau^{k}}(\tau^{k}-s)^{\nu}s^{\xi}f(s^{1/k},\epsilon)ds\right|\right\}$$
$$\le  \sup_{\tau\in\Omega(\epsilon)}\left\{\frac{1+\left|\frac{\tau}{\epsilon^{r}}\right|^{2k}}{\left|\frac{\tau}{\epsilon^{r}}\right|}e^{-\sigma r_{b}(\beta)\left|\frac{\tau}{\epsilon^{r}}\right|^{k}}|\tau|^{k}\int_{0}^{|\tau|^{k}}\frac{1+\frac{s^2}{|\epsilon^{r}|^{2k}}}{\frac{s^{1/k}}{|\epsilon|^{r}}}e^{-\sigma r_{b}(\alpha)\frac{s}{|\epsilon^{r}|^{k}}}|f(s^{1/k}e^{\sqrt{-1}k\arg(\tau)},\epsilon)|\right.$$
$$\left. (|\tau|^{k}-s)^{\nu}s^{\xi}\frac{\frac{s^{1/k}}{|\epsilon|^{r}}}{1+\frac{s^2}{|\epsilon^{r}|^{2k}}}\exp\left(\sigma r_{b}(\alpha)\frac{s}{|\epsilon^{r}|^{k}}\right)ds\right\}.$$
Taking into account that for every $s\in[0,|\tau|^{k}]$ one has
$$\exp\left(-\sigma r_{b}(\beta)\left|\frac{\tau}{\epsilon^{r}}\right|^{k}\right)\exp\left(\sigma r_{b}(\alpha)\frac{s}{|\epsilon^{r}|^{k}}\right)\le \exp\left(-\sigma(r_b(\beta)-r_b(\alpha))\left|\frac{\tau}{\epsilon^r}\right|^{k}\right)=:e(\left|\frac{\tau}{\epsilon^r}\right|^{k}),$$
and by the change of variable $s=|\epsilon^{r}|^{k}h$, the last expression can be upper bounded by
$$\left\|f(\tau,\epsilon)\right\|_{\alpha,\epsilon,\Omega(\epsilon)}\sup_{\tau\in\Omega(\epsilon)}\left\{\frac{1+\left|\frac{\tau}{\epsilon^{r}}\right|^{2k}}{\left|\frac{\tau}{\epsilon^{r}}\right|}e(\left|\frac{\tau}{\epsilon^r}\right|^{k})|\tau|^{k}\int_{0}^{\frac{|\tau|^{k}}{|\epsilon^{r}|^{k}}} (|\tau|^{k}-|\epsilon^r|^{k}h)^{\nu}|\epsilon^{r}|^{k\xi}h^{\xi}\frac{h^{1/k}}{1+h^2}|\epsilon^{r}|^{k}dh\right\}$$
$$\le |\epsilon|^{rk(2+\nu+\xi)}\left\|f(\tau,\epsilon)\right\|_{\alpha,\epsilon,\Omega(\epsilon)}\sup_{x\ge0}B(x),$$
where 
$$B(x)=\frac{1+x^2}{x^{1/k}}e(x)x\int_{0}^{x}\frac{h^{1/k}}{1+h^2}(x-h)^{\nu}h^{\xi}dh.$$
It only rests to provide a constant upper bound for $B(x)$ in order to conclude the proof. One can estimate
$$B(x)\le (1+x^2)e(x)x^{\nu+1}\int_{0}^{x}\frac{h^{\xi}}{1+h^2}dh=B_2(x).$$

>From standard calculations one arrives at 
$$B_2(x)\le C_{31}x^{\nu+\xi+3}\exp\left(-\sigma(r_{b}(\beta)-r_{b}(\alpha))x\right)$$
for some $C_{13}>0$. The standard estimates
$$x^{m_{1}}e^{-m_{2}x}\le \left(\frac{m_1}{m_2}\right)^{m_{1}}e^{-m_1},\quad x\ge0$$
for $m_1,m_2>0$ and the definition of $r_{b}$, one concludes that
$$B_{2}(x)\le C_{32}(\nu,\xi,\sigma)\left(\frac{(\beta+1)^b}{\beta-\alpha}\right)^{\nu+\xi+3}.$$
The result follows directly from here. 
\end{proof}

\section{An auxiliary Cauchy problem}\label{seccion3}

In this section we study the existence of a formal solution for the forthcoming auxiliary Cauchy problem (\ref{e237}), (\ref{e238}). After assuring the existence of a formal solution to this problem as a formal power series in $z$, we provide estimates on its coefficients in terms of the norms in Definition~\ref{defi1}. 

We keep the notations of Section~\ref{seccion1}, the construction of $\Omega(\epsilon)$ for every $\epsilon\in\mathcal{E}$ and also the values of the constants $r_1,r_2,s_1,s_2,r,k,b,\sigma,a_1$ and $a_2$ hold.

Let $S$ be a positive integer and $\mathcal{S}$ be a finite subset of $\N^{3}$. For every $(s,\kappa_0,\kappa_1)\in\mathcal{S}$, $b_{\kappa_0\kappa_1}(z,\epsilon)$ is a holomorphic and bounded function in a product of discs centered at the origin. We put
\begin{equation}\label{e243}
b_{\kappa_0\kappa_1}(z,\epsilon)=\sum_{\beta\ge0}b_{\kappa_0\kappa_1\beta}(\epsilon)\frac{z^{\beta}}{\beta!},
\end{equation}
for some holomorphic and bounded functions $b_{\kappa_0\kappa_1\beta}(\epsilon)$ defined on some neighborhood of the origin, which is common for every $\beta\ge0$. We assume that $b_{\kappa_0\kappa_10}(\epsilon)\equiv 0$ for every $(\kappa_0,\kappa_1,s)\in\mathcal{S}$.

We now make the following assumption on the elements of $\mathcal{S}$.

\textbf{Assumption (D):}
For every $(s,\kappa_0,\kappa_1)\in\mathcal{S}$ we have that $S>\kappa_0$, $S>\kappa_1$, $\kappa_0\ge1$. Moreover, there exists a nonnegative integer $\delta_{\kappa_0}\ge k$ such that $$s=\kappa_0(k+1)+\delta_{\kappa_0},$$
and that $S>\left\lfloor b\left(\frac{\delta_{\kappa_0}}{k}+\kappa_0\right)\right\rfloor +1$.

We also consider  $A_{\kappa_0,p}\in\C$ for every $(s,\kappa_0,\kappa_1)\in\mathcal{S}$ and $1\le p\le \kappa_0$.

It is worth mentioning that $\epsilon\in\mathcal{E}$ remains fixed through the whole section, so that the solution of the auxiliary Cauchy problem depends on $\epsilon$.

For every fixed $\epsilon\in\mathcal{E}$ we consider the following Cauchy problem
\begin{equation}\label{e237}
((k\tau^{k})^{s_2}+a_2)(\epsilon^{r_1-s_1rk}(k\tau^k)^{s_1}+a_1)\partial_{z}^{S}W(\tau,z,\epsilon)
\end{equation}
$$=\sum_{(s,\kappa_0,\kappa_1)\in\mathcal{S}}b_{\kappa_0\kappa_1}(z,\epsilon)\epsilon^{-r(s-\kappa_0)}\left[\frac{\tau^{k}}{\Gamma\left(\frac{\delta_{\kappa_0}}{k}\right)}\int_{0}^{\tau^{k}}(\tau^{k}-s)^{\frac{\delta_{\kappa_0}}{k}-1}(ks)^{\kappa_0}\partial_{z}^{\kappa_1}W(s^{1/k},z,\epsilon)\frac{ds}{s} \right.$$
$$\left.+\sum_{1\le p\le \kappa_0-1}A_{\kappa_0,p}\frac{\tau^{k}}{\Gamma\left(\frac{\delta_{\kappa_0+k(\kappa_0-p)}}{k}\right)}\int_{0}^{\tau^{k}}(\tau^k-s)^{\frac{\delta_{\kappa_0}+k(\kappa_0-p)}{k}-1}(ks)^{p}\partial_{z}^{\kappa_1}W(s^{1/k},z,\epsilon)\frac{ds}{s}\right],$$   
for given initial data
\begin{equation}\label{e238}
(\partial_{z}^{j}W)(\tau,0,\epsilon)=W_{j}(\tau,\epsilon)\in F_{j,\epsilon,\Omega(\epsilon)},\quad 0\le j\le S-1.
\end{equation}

\begin{prop}\label{prop2}
Under Assumptions (A), (B), (C) on the geometric configuration of our framework, and under Assumption (D), there exists a formal power series solution of (\ref{e237}),(\ref{e238}),
\begin{equation}\label{e258}
W(\tau,z,\epsilon)=\sum_{\beta\ge0}W_{\beta}(\tau,\epsilon)\frac{z^{\beta}}{\beta!}\in F_{\beta,\epsilon,\Omega(\epsilon)}[[z]],
\end{equation}
such that $W_\beta(\tau,\epsilon)\in F_{\beta,\epsilon,\Omega(\epsilon)}$ for every $\beta\ge0$. Moreover, these coefficients satisfy the recursion formula
\begin{equation}\label{e257}
\frac{W_{\beta+S}(\tau,\epsilon)}{\beta!}=\frac{1}{((k\tau^k)^{s_2}+a_2)(\epsilon^{r_1-s_1rk}(k\tau^k)^{s_1}+a_1)}\sum_{(s,\kappa_0,\kappa_1)\in\mathcal{S}}\sum_{\alpha_0+\alpha_1=\beta}\frac{b_{\kappa_0\kappa_1\alpha_0}(\epsilon)}{\alpha_0!}\epsilon^{-r(s-\kappa_0)}\times
\end{equation}
$$\times\left[\frac{\tau^{k}}{\Gamma\left(\frac{\delta_{\kappa_0}}{k}\right)}\int_{0}^{\tau^{k}}(\tau^k-s)^{\frac{\delta_{\kappa_0}}{k}-1}(ks)^{\kappa_0}\frac{W_{\alpha_1+\kappa_1}(s^{1/k},\epsilon)}{\alpha_1!}\frac{ds}{s}\right.$$
$$\left.+\sum_{1\le p \le \kappa_0-1}A_{\kappa_0,p}\frac{\tau^{k}}{\Gamma\left(\frac{\delta_{\kappa_0}+k(\kappa_0-p)}{k}\right)}\int_{0}^{\tau^{k}}(\tau^k-s)^{\frac{\delta_{\kappa_0}+k(\kappa_0-p)}{k}-1}(ks)^{p}\frac{W_{\alpha_1+\kappa_1}(s^{1/k},\epsilon)}{\alpha_1!}\frac{ds}{s} \right],$$
for every $\beta\ge0$, $\tau\in\Omega(\epsilon)$.
\end{prop}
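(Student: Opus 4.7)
The approach is to insert the formal ansatz (\ref{e258}) into equation (\ref{e237}), identify the coefficient of $z^{\beta}/\beta!$ on both sides, invert the non-vanishing factor $((k\tau^{k})^{s_2}+a_2)(\epsilon^{r_1-s_1rk}(k\tau^{k})^{s_1}+a_1)$ to extract a recursion for $W_{\beta+S}$, and then verify inductively that each $W_{\beta+S}$ lives in the Banach space $F_{\beta+S,\epsilon,\Omega(\epsilon)}$.

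First, expanding the Cauchy product $b_{\kappa_0\kappa_1}(z,\epsilon)\partial_z^{\kappa_1}W(s^{1/k},z,\epsilon)$ using (\ref{e243}) together with the expansion of $\partial_z^{\kappa_1}W$ produces the coefficient $\beta!\sum_{\alpha_0+\alpha_1=\beta}\frac{b_{\kappa_0\kappa_1\alpha_0}(\epsilon)}{\alpha_0!}\frac{W_{\alpha_1+\kappa_1}(s^{1/k},\epsilon)}{\alpha_1!}$ at order $z^{\beta}$. Matching coefficients of $z^{\beta}/\beta!$ on both sides of (\ref{e237}) and dividing out the leading factor---which is bounded away from zero on $\Omega(\epsilon)$ uniformly in $\epsilon\in\mathcal{E}$ by Lemmas~\ref{lema1} and~\ref{lema2}---leads precisely to (\ref{e257}), so a formal power series solution is uniquely determined by the initial data (\ref{e238}).

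The central point is the inductive verification that $W_{\beta+S}\in F_{\beta+S,\epsilon,\Omega(\epsilon)}$, the base cases $0\le j\le S-1$ being supplied directly by (\ref{e238}). For the inductive step, each summand on the right-hand side of (\ref{e257}) has the shape $g(\tau,\epsilon)\,\tau^{k}\int_{0}^{\tau^{k}}(\tau^{k}-s)^{\nu}s^{\xi}W_{\alpha_1+\kappa_1}(s^{1/k},\epsilon)\,ds$, where $g$ is uniformly bounded on $\Omega(\epsilon)$. The multiplicative lemma stated just before Proposition~\ref{prop1} absorbs $g$ without inflating the norm, and Proposition~\ref{prop1} itself estimates the integral, provided that $\nu\ge 0$, $\xi\ge-1$, and $\alpha_1+\kappa_1<\beta+S$. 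These hypotheses are precisely what Assumption (D) delivers: $\delta_{\kappa_0}\ge k$ yields $\nu=\delta_{\kappa_0}/k-1\ge 0$ in the first integral (and $\nu=(\delta_{\kappa_0}+k(\kappa_0-p))/k-1\ge 1$ in the $p$-sum), the condition $\kappa_0\ge 1$ (respectively $p\ge 1$) yields $\xi=\kappa_0-1\ge 0$ (respectively $\xi=p-1\ge 0$), and the strict inequality $S>\kappa_1$ gives $\alpha_1+\kappa_1\le \beta+(S-1)<\beta+S$. Proposition~\ref{prop1} then applies with index pair $(\alpha,\beta)=(\alpha_1+\kappa_1,\beta+S)$ and produces a finite norm for $W_{\beta+S}$ in $F_{\beta+S,\epsilon,\Omega(\epsilon)}$, completing the induction.

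The main difficulty I foresee is purely bookkeeping: matching the exponents $\nu,\xi$ in each of the two integrals in (\ref{e257}) to the hypotheses of Proposition~\ref{prop1}, and tracking the index shift $\alpha_1+\kappa_1$ versus $\beta+S$ across the double sum. Since the proposition asserts only existence and coefficient-wise belonging in the spaces $F_{\beta,\epsilon,\Omega(\epsilon)}$, and not any uniform estimate on the sequence $\{\|W_\beta\|_{\beta,\epsilon,\Omega(\epsilon)}\}_\beta$, no summability of the formal series in $z$ is needed at this stage; that analysis is deferred to a later section.
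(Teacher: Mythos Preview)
Your argument is correct for the statement as written: the recursion is obtained exactly as you describe, and the inductive check that $W_{\beta+S}\in F_{\beta+S,\epsilon,\Omega(\epsilon)}$ goes through via Proposition~\ref{prop1} and the multiplicative lemma, with the index verification $\alpha_1+\kappa_1\le\beta+S-1<\beta+S$ and the exponent checks $\nu\ge0$, $\xi\ge-1$ following from Assumption~(D) just as you say.

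The paper, however, takes a genuinely different and stronger route. Rather than stopping at mere membership, it takes norms on both sides of (\ref{e257}), feeds in Lemmas~\ref{lema1}, \ref{lema2} and Proposition~\ref{prop1}, and---crucially using the identity $|\epsilon|^{-r(s-\kappa_0)+rk(\delta_{\kappa_0}/k+\kappa_0)}=1$ from Assumption~(D)---obtains a recursive inequality for $w_\beta(\epsilon):=\|W_\beta\|_{\beta,\epsilon,\Omega(\epsilon)}$. It then majorises this inequality by the coefficients of a convergent analytic solution $u(x,\epsilon)=\sum u_\beta(\epsilon)x^\beta/\beta!$ of an auxiliary linear ODE Cauchy problem, concluding the uniform bound
\[
\|W_\beta(\tau,\epsilon)\|_{\beta,\epsilon,\Omega(\epsilon)}\le u_\beta(\epsilon)\le M Z_0^{\beta}\beta!,
\]
tagged as (\ref{e319}). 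Your approach buys simplicity and suffices for the proposition's literal claim; the paper's majorant-series approach buys the quantitative growth estimate (\ref{e319}), which is \emph{not} deferred but is cited verbatim in the proof of Theorem~\ref{teo1} (see (\ref{e434})) to guarantee that the Laplace transforms converge and that $Y(T,z,\epsilon)$ is holomorphic on a disc $D(0,1/Z_0)$ in $z$. If you keep your shorter proof here, you will have to reproduce the majorant argument in full inside Theorem~\ref{teo1}.
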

\begin{proof}
Let $\beta\ge0$, $\epsilon\in\mathcal{E}$ and $\tau\in\Omega(\epsilon)$. The recursion formula in (\ref{e257}) is directly obtained after substitution of (\ref{e258}) in the equation (\ref{e237}). It is worth remarking that from the construction of $\Omega(\epsilon)$ leading to Lemma~\ref{lema1} and Lemma~\ref{lema2}, the function $W_{\beta+S}(\tau,\epsilon)$ is well defined and holomorphic in $\Omega(\epsilon)$ for every $\beta\ge0$. We now prove that $W_{\beta}(\tau,\epsilon)\in F_{\beta,\epsilon,\Omega(\epsilon)}$ for every $\beta\ge0$. 

This is valid for $0\le \beta\le S-1$ due to (\ref{e238}) holds.

Let $$w_{\beta}(\epsilon):=\left\|W_{\beta}(\tau,\epsilon)\right\|_{\beta,\epsilon,\Omega(\epsilon)}.$$
Taking $\left\|\cdot\right\|_{\beta+S,\epsilon,\Omega(\epsilon)}$ on both sides of the recursion formula (\ref{e257}), one obtains that
$$\frac{w_{\beta+S}(\epsilon)}{\beta!}\le\frac{1}{|(k\tau^k)^{s_2}+a_2||\epsilon^{r_1-s_1rk}(k\tau^k)^{s_1}+a_1|}\sum_{(s,\kappa_0,\kappa_1)\in\mathcal{S}}\sum_{\alpha_0+\alpha_1=\beta}\frac{|b_{\kappa_0\kappa_1\alpha_0}(\epsilon)|}{\alpha_0!}|\epsilon|^{-r(s-\kappa_0)}\times
$$
$$\times\left[\left\|\frac{\tau^{k}}{\Gamma\left(\frac{\delta_{\kappa_0}}{k}\right)}\int_{0}^{\tau^{k}}(\tau^k-s)^{\frac{\delta_{\kappa_0}}{k}-1}(ks)^{\kappa_0}\frac{W_{\alpha_1+\kappa_1}(s^{1/k},\epsilon)}{\alpha_1!}\frac{ds}{s}\right\|_{\beta+S,\epsilon,\Omega(\epsilon)}\right.$$
$$\left.+\sum_{1\le p \le \kappa_0-1}|A_{\kappa_0,p}|\left\|\frac{\tau^{k}}{\Gamma\left(\frac{\delta_{\kappa_0}+k(\kappa_0-p)}{k}\right)}\int_{0}^{\tau^{k}}(\tau^k-s)^{\frac{\delta_{\kappa_0}+k(\kappa_0-p)}{k}-1}(ks)^{p}\frac{W_{\alpha_1+\kappa_1}(s^{1/k},\epsilon)}{\alpha_1!}\frac{ds}{s}\right\|_{\beta+S,\epsilon,\Omega(\epsilon)} \right].
$$
>From Lemma~\ref{lema1}, Lemma~\ref{lema2}, and Proposition~\ref{prop1}, the right-hand side of the previous inequality can be upper bounded so that
\begin{equation}\label{e284}
\frac{w_{\beta+S}(\epsilon)}{\beta!}\le C_4\sum_{(s,\kappa_0,\kappa_1)\in\mathcal{S}}\sum_{\alpha_0+\alpha_1=\beta}\frac{|b_{\kappa_0\kappa_1\alpha_0}(\epsilon)|}{\alpha_0!}|\epsilon|^{-r(s-\kappa_0)}\times
\end{equation}
$$\times\left[\frac{k^{\kappa_0}|\epsilon|^{rk(\frac{\delta_{\kappa_0}}{k}+\kappa_0)}}{\Gamma\left(\frac{\delta_{\kappa_0}}{k}\right)}\left(\frac{(\beta+S+1)^b}{\beta+S-\alpha_1-\kappa_1}\right)^{\frac{\delta_{\kappa_0}}{k}+\kappa_0+1}\frac{w_{\alpha_{1}+\kappa_{1}}(\epsilon)}{\alpha_1!}\right.$$
$$\left. +\sum_{1\le p\le \kappa_0-1}|A_{\kappa_0,p}|\frac{k^{p}|\epsilon|^{rk(\frac{\delta_{\kappa_0}}{k}+\kappa_0)}}{\Gamma\left(\frac{\delta_{\kappa_0}+k(\kappa_0-p)}{k}\right)}\left(\frac{(\beta+S+1)^b}{\beta+S-\alpha_1-\kappa_1}\right)^{\frac{\delta_{\kappa_0}+k(\kappa_0-p)}{k}+p+1}\frac{w_{\alpha_{1}+\kappa_1}(\epsilon)}{\alpha_1!}\right],$$
for some $C_4>0$. Observe that $\left\|g(\tau,\epsilon)\right\|_{\alpha,\epsilon,\Omega(\epsilon)}\ge \left\|g(\tau,\epsilon)\right\|_{\gamma,\epsilon,\Omega(\epsilon)}$ whenever $\alpha\le \gamma$. From Assumption (D), one has
$$|\epsilon|^{-r(s-\kappa_0)+rk(\frac{\delta_{\kappa_0}}{k}+\kappa_0)}=1.$$

Let $M_{\kappa_0\kappa_1\beta}>0$ be such that $|b_{\kappa_0\kappa_1\beta}(\epsilon)|\le M_{\kappa_0\kappa_1\beta}$ for all $\beta\ge0$ and for every $(s,\kappa_0,\kappa_1)\in\mathcal{S}$. We define $B_{\kappa_0\kappa_1}(z)=\sum_{\beta\ge0}M_{\kappa_0\kappa_1\beta}\frac{z^{\beta}}{\beta!}$. From the assumptions made on $b_{\kappa_0\kappa_1}$ there exist $D_1,D_2>0$ such that $M_{\kappa_0\kappa_1\beta}\le D_1 D_2^{\beta}\beta!$ for every $\beta\ge0$. The function $B_{\kappa_0\kappa_1}(z)$ turns out to be a holomorphic and bounded function on some neighborhood of the origin.

The terms $\left(\frac{(\beta+S+1)^b}{\beta+S-\alpha_1-\kappa_1}\right)^{\frac{\delta_{\kappa_0}+k(\kappa_0-p)}{k}+p+1}$ appearing in (\ref{e284}) can be upper bounded by
$$C_{41}\beta(\beta-1)\cdots(\beta-\left\lfloor b(\frac{\delta_{\kappa_0}}{k}+\kappa_0)\right\rfloor)(\beta-\left\lfloor b(\frac{\delta_{\kappa_0}}{k}+\kappa_0)\right\rfloor +1)$$ for some $C_{41}>0$.

We consider the Cauchy problem
$$\partial_{x}^{S}u(x,\epsilon)=C_4C_{41}\sum_{(s,\kappa_0,\kappa_1)\in\mathcal{S}}B_{\kappa_0\kappa_1}(x)\left[\frac{k^{\kappa_0}}{\Gamma\left(\frac{\delta_{\kappa_0}}{k}\right)}+\sum_{1\le p\le \kappa_0-1}|A_{\kappa_0,p}|\frac{k^{p}}{\Gamma\left(\frac{\delta_{\kappa_0}+k(\kappa_0-p)}{k}\right)}\right]$$
\begin{equation}\label{e285}
\partial_{x}^{\kappa_1}x^{\left\lfloor b(\frac{\delta_{\kappa_0}}{k}+\kappa_0)\right\rfloor +1}\partial_{x}^{\left\lfloor b(\frac{\delta_{\kappa_0}}{k}+\kappa_0)\right\rfloor +1}u(x,\epsilon),
\end{equation}
with initial conditions
\begin{equation}\label{e286}
(\partial_{x}^{j}u)(0,\epsilon)=w_{j}(\epsilon),\quad 0\le j \le S-1.
\end{equation}
The problem (\ref{e285}),(\ref{e286}) has a unique formal solution 
$$u(x,\epsilon)=\sum_{\beta\ge0}u_{\beta}(\epsilon)\frac{x^{\beta}}{\beta!}\in\R[[x]].$$
Moreover, its coefficients satisfy the recursion formula 
\begin{equation}\label{e302}
\frac{u_{\beta+S}(\epsilon)}{\beta!}= C_4C_{41}\sum_{(s,\kappa_0,\kappa_1)\in\mathcal{S}}\sum_{\alpha_0+\alpha_1=\beta}\frac{M_{\kappa_0\kappa_1\alpha_0}}{\alpha_0!}|\epsilon|^{-r(s-\kappa_0)}\frac{\beta!}{\left(\beta- \left\lfloor b(\frac{\delta_{\kappa_0}}{k}+\kappa_0)\right\rfloor\right)!}
\end{equation}
$$\times\left[\frac{k^{\kappa_0}|\epsilon|^{rk(\frac{\delta_{\kappa_0}}{k}+\kappa_0)}}{\Gamma\left(\frac{\delta_{\kappa_0}}{k}\right)}\frac{u_{\alpha_{1}+\kappa_{1}}(\epsilon)}{\alpha_1!}+\sum_{1\le p\le \kappa_0-1}|A_{\kappa_0,p}|\frac{k^{p}|\epsilon|^{rk(\frac{\delta_{\kappa_0}}{k}+\kappa_0)}}{\Gamma\left(\frac{\delta_{\kappa_0}+k(\kappa_0-p)}{k}\right)}\frac{u_{\alpha_{1}+\kappa_1}(\epsilon)}{\alpha_1!}\right].$$

>From the initial conditions of the problem (\ref{e285}), (\ref{e286}), one gets that $u_{j}(\epsilon)=w_{j}(\epsilon)$ for $0\le j\le S-1$. Regarding (\ref{e284}) and (\ref{e302}) one has
$$w_{\beta}(\epsilon)\le u_{\beta}(\epsilon),$$
for every $\beta\ge0$.

>From the classical theory of existence of solutions of ODEs, there exists $\rho_1>0$ such that whenever $w_{j}(\epsilon)<\rho_1$ for every $0\le j\le S-1$, one has that the unique formal solution of (\ref{e285}), (\ref{e286}), $u(x,\epsilon)=\sum_{\beta\ge0}u_{\beta}(\epsilon)\frac{x^{\beta}}{\beta!}$ belongs to $\C\{x\}$, with a radius of convergence $Z_0>0$. Regarding the previous steps one can affirm that this radius of convergence does not depend on the choice of $\epsilon\in\mathcal{E}$.

This yields the existence of $M>0$ such that
$$\sum_{\beta\ge0}u_{\beta}(\epsilon)\frac{Z_0^{\beta}}{\beta!}<M,$$
for every $\epsilon\in\mathcal{E}$ which entails $0<u_{\beta}(\epsilon)<MZ_0^{\beta}\beta!$ for every $\beta\ge0$.
The result is attained for
\begin{equation}\label{e319}
\left\|W_{\beta}(\tau,\epsilon)\right\|_{\beta,\epsilon,\Omega(\epsilon)}=w_{\beta}(\epsilon)\le u_{\beta}(\epsilon)\le MZ_0^{\beta}\beta!<\infty,
\end{equation}
for every $\beta\ge0$.
\end{proof}


\section{ Analytic solutions of a singular Cauchy problem}

\subsection{Laplace transform and asymptotic expansions}\label{sec41}

In the present section we give some details on the $k$-Borel summability procedure of formal power series with coefficients belonging to a complex Banach space. This is a slightly modified version of the more classical one, which can be found in detail in~\cite{ba2}, Section 3.2. This novel version entails a different behavior of Borel and Laplace transforms with respect to the operators involved, which has already been used in the previous work \cite{lama1} procuring fruitful results in the framework of Cauchy problems depending upon a complex perturbation parameter, with vanishing initial data. We refer to~\cite{lama1} for further details. 

\begin{defin}\label{defi2}
Let $k\ge1$ be an integer. Let $(m_{k}(n))_{n\ge1}$ be the sequence 
$$m_{k}(n)=\Gamma\left(\frac{n}{k}\right)=\int_{0}^{\infty}t^{\frac{n}{k}-1}e^{-t}dt,\qquad n\ge1.$$
Let $(\mathbb{E},\left\|\cdot\right\|_{\mathbb{E}})$ be a complex Banach space. We say a formal power series 
$$\hat{X}(T)=\sum_{n=1}^{\infty}a_{n}T^{n}\in T\mathbb{E}[[T]]$$ is $m_{k}$-summable with respect to $T$ in the direction $d\in[0,2\pi)$ if the following assertions hold:
\begin{enumerate}
\item There exists $\rho>0$ such that the $m_{k}$-Borel transform of $\hat{X}$, $\mathcal{B}_{m_{k}}(\hat{X})$, is absolutely convergent for $|\tau|<\rho$, where
$$\mathcal{B}_{m_{k}}(\hat{X})(\tau)=\sum_{n=1}^{\infty}\frac{a_{n}}{\Gamma\left(\frac{n}{k}\right)}\tau^{n}\in\tau\mathbb{E}[[\tau]].$$
\item The series $\mathcal{B}_{m_{k}}(\hat{X})$ can be analytically continued in a sector $S=\{\tau\in\C^{\star}:|d-\arg(\tau)|<\delta\}$ for some $\delta>0$. In addition to this, the extension is of exponential growth of order $k$ in $S$, meaning that there exist $C,K>0$ such that
$$\left\|\mathcal{B}_{m_{k}}(\hat{X})(\tau)\right\|_{\mathbb{E}}\le Ce^{K|\tau|^{k}},\quad \tau\in S.$$ 
\end{enumerate}
Under these assumptions, the vector valued Laplace transform of $\mathcal{B}_{m_{k}}(\hat{X})$ along direction $d$ is defined by
$$\mathcal{L}_{m_{k}}^{d}\left(\mathcal{B}_{m_{k}}(\hat{X})\right)(T)=k\int_{L_{\gamma}}\mathcal{B}_{m_{k}}(\hat{X})(u)e^{-(u/T)^k}\frac{du}{u},$$
where $L_{\gamma}$ is the path parametrized by $u\in[0,\infty)\mapsto ue^{i\gamma}$,for some appropriate direction $\gamma$ depending on $T$, such that $L_{\gamma}\subseteq S$ and $\cos(k(\gamma-\arg(T)))\ge\Delta>0$ for some $\Delta>0$.

The function $\mathcal{L}_{m_{k}}^{d}(\mathcal{B}_{m_{k}}(\hat{X})$ is well defined and turns out to be a holomorphic and bounded function in any sector of the form $S_{d,\theta,R^{1/k}}=\{T\in\C^{\star}:|T|<R^{1/k},|d-\arg(T)|<\theta/2\}$, for some $\frac{\pi}{k}<\theta<\frac{\pi}{k}+2\delta$ and $0<R<\Delta/K$. This function is known as the $m_k$-sum of the formal power series $\hat{X}(T)$ in the direction $d$.
\end{defin}

The main aim in the present work is to study the asymptotic behavior of the solutions of equation (\ref{e1}), (\ref{e2}) and relate them to its formal solution by means of Gevrey asymptotic expansions. The following are some elementary properties concerning the $m_k$-sums of formal power series which will be crucial in our procedure.

1) The function $\mathcal{L}_{m_{k}}^{d}(\mathcal{B}_{m_{k}}(\hat{X}))(T)$ admits $\hat{X}(T)$ as its Gevrey asymptotic expansion of order $1/k$ with respect to $t$ in $S_{d,\theta,R^{1/k}}$. More precisely, for every $\frac{\pi}{k}<\theta_1<\theta$, there exist $C,M>0$ such that 
$$\left\|\mathcal{L}^{d}_{m_{k}}(\mathcal{B}_{m_{k}}(\hat{X}))(T)-\sum_{p=1}^{n-1}a_{p}T^{p}\right\|_{\mathbb{E}}\le CM^{n}\Gamma(1+\frac{n}{k})|T|^{n},$$
for every $n\ge2$ and $T\in S_{d,\theta,R^{1/k}}$. Watson's lemma (see Proposition 11 p.75 in \cite{ba}) allows us to affirm that $\mathcal{L}^{d}_{m_{k}}(\mathcal{B}_{m_{k}}(\hat{X})(T)$ is unique provided that the opening $\theta_1$ is larger than $\frac{\pi}{k}$. 

2) The set of holomorphic functions having Gevrey asymptotic expansion of order $1/k$ on a sector with values in $\mathbb{E}$ turns out to be a differential algebra (see Theorem 18, 19 and 20 in~\cite{ba}). This, and the uniqueness provided by Watson's lemma provide some properties on $m_{k}$-summable formal power series in direction $d$. 

We now assume $\mathbb{E}$ to be a Banach algebra for the product $\star$. Let $\hat{X}_{1}$, $\hat{X}_{2}\in T\mathbb{E}[[T]]$ be $m_{k}$-summable formal power series in direction $d$. Let $q_1\ge q_2\ge1$ be integers. We assume that $ \hat{X}_{1}+\hat{X}_{2}$, $\hat{X}_{1}\star \hat{X}_{2}$ and $T^{q_1}\partial_{T}^{q_2}\hat{X}_{1}$, which are elements of $T\mathbb{E}[[T]]$, are $m_{k}$-summable in direction $d$. Then, one has
$$\mathcal{L}_{m_{k}}^{d}(\mathcal{B}_{m_{k}}(\hat{X}_{1}))(T)+\mathcal{L}_{m_{k}}^{d}(\mathcal{B}_{m_{k}}(\hat{X}_{2}))(T)=\mathcal{L}_{m_{k}}^{d}(\mathcal{B}_{m_{k}}(\hat{X}_{1}+\hat{X}_{2}))(T),$$
$$\mathcal{L}_{m_{k}}^{d}(\mathcal{B}_{m_{k}}(\hat{X}_{1}))(T)\star \mathcal{L}_{m_{k}}^{d}(\mathcal{B}_{m_{k}}(\hat{X}_{2}))(T)=\mathcal{L}_{m_{k}}^{d}(\mathcal{B}_{m_{k}}(\hat{X}_{1}\star\hat{X}_{2}))(T),$$
$$T^{q_1}\partial_{T}^{q_2}\mathcal{L}^{d}_{m_{k}}(\mathcal{B}_{m_{k}}(\hat{X}_{1}))(T)=\mathcal{L}_{m_{k}}^{d}(\mathcal{B}_{m_{k}}(T^{q_1}\partial_{T}^{q_2}\hat{X}_{1}))(T),$$
for every $T\in S_{d,\theta,R^{1/k}}$.

The next proposition is written without proof for it can be found in \cite{lama1}, Proposition 6.

\begin{prop}\label{prop3}
Let $\hat{f}(t)=\sum_{n\ge1}f_nt^n\in\mathbb{E}[[t]]$, where $(\mathbb{E},\left\|\cdot\right\|_{\mathbb{E}})$ is a Banach algebra. Let $k,m\ge1$ be integers. The following formal identities hold.
$$\mathcal{B}_{m_{k}}(t^{k+1}\partial_{t}\hat{f}(t))(\tau)=k\tau^{k}\mathcal{B}_{m_{k}}(\hat{f}(t))(\tau),$$
$$\mathcal{B}_{m_{k}}(t^{m}\hat{f}(t))(\tau)=\frac{\tau^{k}}{\Gamma\left(\frac{m}{k}\right)}\int_{0}^{\tau^{k}}(\tau^{k}-s)^{\frac{m}{k}-1}\mathcal{B}_{m_{k}}(\hat{f}(t))(s^{1/k})\frac{ds}{s}.$$
\end{prop}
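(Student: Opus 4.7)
The plan is to verify both identities coefficient by coefficient, using only the definition of $\mathcal{B}_{m_k}$ and standard formulas for the Gamma and Beta functions. Since both sides of each identity are formal power series in $\tau$, it is enough to expand $\hat{f}(t)=\sum_{n\ge 1}f_n t^n$, apply $\mathcal{B}_{m_k}$ term by term, and match the coefficient of each power $\tau^N$ on both sides.

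For the first identity, I would start by writing
$$t^{k+1}\partial_t\hat{f}(t)=\sum_{n\ge 1}n\,f_n\,t^{n+k},$$
so that
$$\mathcal{B}_{m_k}\bigl(t^{k+1}\partial_t\hat{f}\bigr)(\tau)=\sum_{n\ge 1}\frac{n\,f_n}{\Gamma\!\bigl(\frac{n+k}{k}\bigr)}\tau^{n+k}.$$
The key algebraic fact is $\Gamma\!\bigl(\frac{n+k}{k}\bigr)=\Gamma\!\bigl(\frac{n}{k}+1\bigr)=\frac{n}{k}\Gamma\!\bigl(\frac{n}{k}\bigr)$, which converts the factor $n/\Gamma((n+k)/k)$ into $k/\Gamma(n/k)$. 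Pulling out $k\tau^k$ then gives exactly $k\tau^k\mathcal{B}_{m_k}(\hat{f})(\tau)$.

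For the second identity, the expansion
$$t^m\hat{f}(t)=\sum_{n\ge 1}f_n\,t^{n+m},\qquad \mathcal{B}_{m_k}(t^m\hat{f})(\tau)=\sum_{n\ge 1}\frac{f_n}{\Gamma\!\bigl(\frac{n+m}{k}\bigr)}\tau^{n+m}$$
is the target. On the right-hand side I would substitute the series for $\mathcal{B}_{m_k}(\hat{f})(s^{1/k})=\sum_{n\ge 1}\frac{f_n}{\Gamma(n/k)}s^{n/k}$ and interchange sum and integral (legitimate at the level of formal series, or on the disc of convergence if one prefers). The inner integral is a Beta integral:
$$\int_{0}^{\tau^k}(\tau^k-s)^{\frac{m}{k}-1}s^{\frac{n}{k}-1}\,ds=\tau^{n+m-k}\,B\!\Bigl(\tfrac{n}{k},\tfrac{m}{k}\Bigr)=\tau^{n+m-k}\,\frac{\Gamma(n/k)\Gamma(m/k)}{\Gamma((n+m)/k)}.$$
Multiplying by the prefactor $\tau^{k}/\Gamma(m/k)$ and dividing by $s$ before integrating (which shifts the exponent of $s$ from $n/k$ to $n/k-1$) cancels the $\Gamma(n/k)$ and $\Gamma(m/k)$ factors, leaving precisely $\sum_{n\ge 1}f_n\tau^{n+m}/\Gamma((n+m)/k)$, which matches the left-hand side.

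There is no real obstacle here: both claims are purely formal identities reflecting the behavior of a modified Borel transform under multiplication by $t^m$ and under the singular derivation $t^{k+1}\partial_t$, and the computations are essentially one-line applications of the functional equation of $\Gamma$ and the Beta-function integral. The only mild point of care is ensuring that the term-by-term interchange in the second identity is permitted; since the statement is at the formal level in $\mathbb{E}[[t]]$, this is automatic from the definition.
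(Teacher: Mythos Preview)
Your argument is correct: the coefficient-by-coefficient verification via the functional equation $\Gamma(x+1)=x\Gamma(x)$ for the first identity and the Beta integral $\int_{0}^{\tau^{k}}(\tau^{k}-s)^{m/k-1}s^{n/k-1}\,ds=\tau^{n+m-k}B(n/k,m/k)$ for the second one is exactly what is needed, and all cancellations go through as you describe. The paper itself does not supply a proof of this proposition but refers to Proposition~6 of~\cite{lama1}; your direct computation is the standard (and essentially only) route, so there is nothing further to compare.
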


\subsection{Analytic solutions of a singular Cauchy problem}

Let $S\ge1$ be an integer. We also consider a nonnegative integer $r_1$ and positive integers $r_2,s_1,s_2,k$. The positive real number $r$ is defined by (\ref{e131}). Let $a_1,a_2\in\C^{\star}$ and assume $\mathcal{E}$, $S_{d}$ (and with it $\delta_1$) and $D(0,\rho_0)$ are constructed in the shape of Section~\ref{seccion1}, for some $d\in[0,2\pi)$, and some $\rho_0>0$ so that Assumptions (A), (B) and (C) hold. We also fix $\gamma\in[0,2\pi)$ such that $\R_{+}e^{i\gamma}\subseteq S_{d}\cup\{0\}$.

Let $\mathcal{S}$ be as in Section~\ref{seccion3}, which satisfies Assumption (D). For every $(s,\kappa_0,\kappa_1)\in\mathcal{S}$ we consider an holomorphic and bounded function $b_{\kappa_{0}\kappa_1}(z,\epsilon)$ defined in a product of discs with center at the origin which can be written as in (\ref{e243}), and $A_{\kappa_0,p}\in\C$ for every $1\le p\le \kappa_0-1$.

We point out that the perturbation parameter remains fixed in this singular Cauchy problem, as in the auxiliary Cauchy problem in Section~\ref{seccion3}.

For every $\epsilon\in\mathcal{E}$ we consider the following Cauchy problem
\begin{equation}\label{e385}
((T^{k+1}\partial_{T})^{s_2}+a_2)(\epsilon^{r_1-s_1rk}(T^{k+1}\partial_{T})^{s_1}+a_1)\partial_{z}^{S}Y(T,z,\epsilon)
\end{equation}
$$=\sum_{(s,\kappa_0,\kappa_1)\in\mathcal{S}}b_{\kappa_0\kappa_1}(z,\epsilon)\epsilon^{-r(s-\kappa_0)}T^{s}(\partial_{T}^{\kappa_0}\partial_{z}^{\kappa_1}Y)(T,z,\epsilon),$$
for given initial conditions
\begin{equation}\label{e389}
(\partial^{j}_{z}Y)(T,0,\epsilon)=Y_j(T,\epsilon),\quad 0\le j\le S-1.
\end{equation}
The initial conditions $(Y_{j}(T,\epsilon))_{0\le j\le S-1}$ are constructed as follows: for every $0\le j\le S-1$, let $\tau\mapsto W_{j}(\tau,\epsilon)$ be a holomorphic function defined in $\Omega(\epsilon)$. Moreover, assume there exists $M_0>0$ such that
\begin{equation}\label{e399}
\sup_{\epsilon\in\mathcal{E}}\left\|W_{j}(\tau,\epsilon)\right\|_{j,\epsilon,\Omega(\epsilon)}<M_0,\quad 0\le j\le S-1.
\end{equation}
Then, we define
\begin{equation}\label{e400}
Y_{j}(T,\epsilon):=\mathcal{L}_{m_{k}}^{d}(W_{j}(\tau,\epsilon))(T),
\end{equation}
where the Laplace transform is taken with respect to the variable $\tau$, along the direction $d$. Observe from Definition~\ref{defi1} and Definition~\ref{defi2} that for every fixed $\epsilon\in\mathcal{E}$, the definition in (\ref{e400}) makes sense, providing a function $T\mapsto Y_{j}(T,\epsilon)$ which is well defined and holomorphic for all $T=|T|e^{i\theta}$ such that $\cos(k(\gamma-\theta))\ge\Delta$, for some $\Delta>0$, and $|T|\le|\epsilon|^{r}\frac{\Delta^{1/k}}{(\sigma \xi(b))^{1/k}}$, where $\xi(b)=\sum_{n\ge0}\frac{1}{(n+1)^b}$.

In the incoming result, we provide the solution of (\ref{e385}), (\ref{e389}) by means of the properties of Laplace transform and the solution of the auxiliary Cauchy problem studied in Section~\ref{seccion3}.

\begin{theo}\label{teo1}
Let $\epsilon\in\mathcal{E}$. Under the assumptions made at the beginning of the present section the problem (\ref{e385}), (\ref{e389}) admits a holomorphic solution $(T,z)\mapsto Y(T,z,\epsilon)$ defined in $$S_{d,\theta,|\epsilon|^{r}\left(\frac{\Delta}{\sigma\xi(b)}\right)^{1/k}}\times D(0,1/Z_0),$$ for some $Z_0>0$ and some $\theta>\pi/k$, where
\begin{equation}\label{e411}
S_{d,\theta,|\epsilon|^{r}\left(\frac{\Delta}{\sigma\xi(b)}\right)^{1/k}}=\left\{T\in\C^{\star}:|T|\le |\epsilon|^{r}\left(\frac{\Delta}{\sigma\xi(b)}\right)^{1/k},|\arg(T)-d|<\frac{\theta}{2}\right\}.
\end{equation}
\end{theo}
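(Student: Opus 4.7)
The plan is to construct $Y$ by applying the $m_k$-Laplace transform in direction $d$ to each coefficient $W_\beta(\tau,\epsilon)$ produced by Proposition~\ref{prop2}, and then to verify that
$$Y(T,z,\epsilon):=\sum_{\beta\ge 0}\mathcal{L}_{m_k}^{d}(W_\beta(\tau,\epsilon))(T)\,\frac{z^\beta}{\beta!}$$
solves (\ref{e385}), (\ref{e389}) by transporting the recursion (\ref{e257}) to the $T$-side through the Borel--Laplace correspondence of Proposition~\ref{prop3}.

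To control each Laplace integral, I invoke the uniform bound $\|W_\beta(\tau,\epsilon)\|_{\beta,\epsilon,\Omega(\epsilon)}\le MZ_0^\beta \beta!$ from (\ref{e319}). Unfolding Definition~\ref{defi1} yields the pointwise estimate
$$|W_\beta(\tau,\epsilon)|\le M Z_0^\beta \beta!\,\frac{|\tau/\epsilon^r|}{1+|\tau/\epsilon^r|^{2k}}\exp\!\left(\sigma r_b(\beta)|\tau/\epsilon^r|^k\right),$$
and since $r_b(\beta)\le \xi(b)$ uniformly in $\beta$, while $L_\gamma\subset S_d\cup\{0\}\subset \Omega(\epsilon)$ (the latter inclusion granted by Assumption (C), which forces $S_d\cap\Omega_1(\epsilon)=\emptyset$), the integral $k\int_{L_\gamma}W_\beta(u,\epsilon)e^{-(u/T)^k}du/u$ converges for every $T$ in the sector (\ref{e411}). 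A direct majorization gives $|Y_\beta(T,\epsilon)|\le C_5 MZ_0^\beta \beta!$ uniformly in $T$, $\beta$ and $\epsilon$, so the series defining $Y$ converges normally on $S_{d,\theta,|\epsilon|^r(\Delta/\sigma\xi(b))^{1/k}}\times D(0,1/Z_0)$ and defines a holomorphic function there.

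To verify the equation I apply $\partial_z^S$ termwise and match coefficients against (\ref{e257}). Proposition~\ref{prop3} converts $T^{k+1}\partial_T$ into multiplication by $k\tau^k$, so the left hand side of (\ref{e385}) is carried onto the left hand side of (\ref{e237}). For the right hand side, each monomial $T^s\partial_T^{\kappa_0}$ with $s=\kappa_0(k+1)+\delta_{\kappa_0}$ (Assumption (D)) must be rewritten as
$$T^{\delta_{\kappa_0}}(T^{k+1}\partial_T)^{\kappa_0}+\sum_{1\le p\le \kappa_0-1}A_{\kappa_0,p}\,T^{\delta_{\kappa_0}+k(\kappa_0-p)}(T^{k+1}\partial_T)^p,$$
following the Tahara-type expansion used in \cite{taya}, where the constants $A_{\kappa_0,p}$ are precisely those fixed in Section~\ref{seccion3}. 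Proposition~\ref{prop3} then transforms each such summand into the corresponding convolution integral in (\ref{e237}), and the initial conditions (\ref{e389}) hold automatically by construction via (\ref{e400}).

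The main obstacle is the algebraic step just sketched: producing the commutator identity that rewrites $T^s\partial_T^{\kappa_0}$ as a combination of iterates of $T^{k+1}\partial_T$, confirming that the universal coefficients it generates agree with the $A_{\kappa_0,p}$ of Section~\ref{seccion3}, and verifying that the powers of $\epsilon$ collapse via $-r(s-\kappa_0)+rk(\delta_{\kappa_0}/k+\kappa_0)=0$, a consequence of Assumption (D) and $r=r_2/(s_2 k)$. Once this bookkeeping is settled, the termwise transport through Proposition~\ref{prop3} combined with the uniform estimates of the previous paragraph closes the argument.
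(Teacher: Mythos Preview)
Your proposal is correct and follows essentially the same route as the paper: rewrite $T^{s}\partial_{T}^{\kappa_0}$ via the Tahara--Yamazawa expansion, use Proposition~\ref{prop3} to identify the Borel side with (\ref{e237}), invoke Proposition~\ref{prop2} and the bound (\ref{e319}) to control the $W_\beta$, and define $Y$ as the termwise $m_k$-Laplace transform, which is then a solution by the algebra properties of the $m_k$-sum listed in item 2) of Section~\ref{sec41}. One small remark: the identity $-r(s-\kappa_0)+rk(\delta_{\kappa_0}/k+\kappa_0)=0$ you flag as part of the ``main obstacle'' is not actually needed here---the factor $\epsilon^{-r(s-\kappa_0)}$ appears identically on both sides of (\ref{e385}) and (\ref{e237}), and the cancellation you mention was already consumed inside the proof of Proposition~\ref{prop2} to make the constants $M,Z_0$ independent of $\epsilon$.
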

\begin{proof}
Taking into account Assumption (D), one can write $T^{s}\partial_{T}^{\kappa_0}$ in the form $T^{\delta_{\kappa_0}}T^{\kappa_0(k+1)}\partial_{T}^{\kappa_0},$
for every $(s,\kappa_0,\kappa_1)\in\mathcal{S}$, for some nonnegative integers $\delta_{\kappa_0}$. By means of the formula appearing in page 40 of~\cite{taya}, one can expand the previous operators in the form
\begin{equation}\label{e415}
T^{\delta_{\kappa_0}}T^{\kappa_0(k+1)}\partial_{T}^{\kappa_0}=T^{\delta_{\kappa_0}}\left((T^{k+1}\partial_T)^{\kappa_0}+\sum_{1\le p\le \kappa_0-1}A_{\kappa_0,p}T^{k(\kappa_0-p)}(T^{k+1}\partial_{T})^p\right),
\end{equation}
for some complex numbers $A_{\kappa_0,p}\in\C$. Regarding (\ref{e415}), equation (\ref{e385}) is transformed into 
\begin{equation}\label{e419}
((T^{k+1}\partial_{T})^{s_2}+a_2)(\epsilon^{r_1-s_1rk}(T^{k+1}\partial_{T})^{s_1}+a_1)\partial_{z}^{S}Y(T,z,\epsilon)
\end{equation}
$$ 
=\sum_{(s,\kappa_0,\kappa_1)\in\mathcal{S}}b_{\kappa_0\kappa_1}(z,\epsilon)\epsilon^{-r(s-\kappa_0)}T^{\delta_{\kappa_0}}\left[(T^{k+1}\partial_{T})^{\kappa_0}+\sum_{1\le p\le \kappa_0-1}A_{\kappa_0,p}T^{k(\kappa_0-p)}(T^{k+1}\partial_{T})^p\right]\partial_{z}^{\kappa_1}Y(T,z,\epsilon).
$$

One can apply the formal Borel transform $\mathcal{B}_{m_{k}}$ with respect to the variable $T$ at both sides of equation (\ref{e415}). The properties of this formal operator shown in Proposition~\ref{prop3} turn equation (\ref{e415}) into (\ref{e237}), with $W(\tau,z,\epsilon)=\mathcal{B}_{m_{k}}(Y(T,z,\epsilon))(\tau)$.

Regarding (\ref{e399}), one has $W_{j}\in F_{j,\epsilon,\Omega(\epsilon)}$ for $0\le j\le S-1$. One can apply Proposition~\ref{prop2} to the Cauchy problem with equation (\ref{e237}) and initial data given by
\begin{equation}\label{e432}
(\partial_{z}^{j}W)(\tau,0,\epsilon)=W_{j}(\tau,\epsilon),\quad 0\le j\le S-1
\end{equation}
to arrive at the existence of a formal solution of this problem of the form
\begin{equation}\label{e433}
\sum_{\beta\ge0}W_{\beta}(\tau,\epsilon)\frac{z^\beta}{\beta!}\in F_{\beta,\epsilon,\Omega(\epsilon)}[[z]].
\end{equation}
Moreover, regarding (\ref{e319}) there exist $Z_0,M>0$ such that
\begin{equation}\label{e434}
|W_{\beta}(\tau,\epsilon)|\le MZ_0^{\beta}\beta!\frac{\left|\frac{\tau}{\epsilon^r}\right|}{1+\left|\frac{\tau}{\epsilon^r}\right|^{2k}}\exp\left(\sigma r_b(\beta)\left|\frac{\tau}{\epsilon^r}\right|^{k}\right), \quad \beta\ge0,
\end{equation}
for every $\tau\in\Omega(\epsilon)$.

If we write $T=|T|e^{i\theta}$, we deduce that
\begin{align*}
\left|k\int_{L_{\gamma}}W_{\beta}(u,\epsilon)e^{\left(\frac{u}{T}\right)^{k}}\frac{du}{u}\right|&\le k\int_{0}^{\infty}|W_{\beta}(se^{i\gamma},\epsilon)|e^{-\frac{s^{k}}{|T|^{k}}\cos(k(\gamma-\arg(T)))}ds\\
&\le kMZ_0^{\beta}\beta! \int_{0}^{\infty}\exp(\left[\frac{\sigma\xi(b)}{|\epsilon|^{rk}}-\frac{\Delta}{|T|^k}\right]s^k)ds,
\end{align*}
for every $\beta\ge0$. 

This entails the function $\mathcal{L}_{m_{k}}^{d}(W_{\beta}(\tau,\epsilon))(T)$ is well defined for $T\in S_{d,\theta,|\epsilon|^{r}\left(\frac{\Delta}{\sigma\xi(b)}\right)^{1/k}},$ for every $\frac{\pi}{k}<\theta<\frac{\pi}{k}+2\delta$.

Moreover, 
$$(T,z)\mapsto Y(T,z,\epsilon):=\sum_{\beta\ge0}\mathcal{L}_{m_{k}}^{d}(W_{\beta}(\tau,\epsilon))(T)\frac{z^{\beta}}{\beta!}$$
defines a holomorphic function on $S_{d,\theta,|\epsilon|^{r}\left(\frac{\Delta}{\sigma\xi(b)}\right)^{1/k}}\times D(0,\frac{1}{Z_0})$, and it turns out to be a solution of the problem (\ref{e385}), (\ref{e389}) from the properties of Laplace transform in 2), Section~\ref{sec41} and the fact that (\ref{e433}) is a formal solution of (\ref{e237}), (\ref{e432}).

\end{proof}

\section{Formal series solutions and multi-level Gevrey asymptotic expansions in a complex parameter for a Cauchy problem}

This section is devoted to the study of the formal and analytic solutions of the main problem in the present work. The analytic solution is approximated by the formal solution in the perturbation parameter near the origin following different Gevrey levels which depend on the nature and location of the singular points involved. One may find two different situations depending on the geometry of the problem: that in which only the singularities not depending on the perturbation parameter are involved, and other situation in which a moving singularity makes appearance. This last one depends on the perturbation parameter and makes the singularity tend to the origin when the parameter vanishes. 

Let $r_1$ be a nonnegative integer, and $r_2,s_1,s_2,k$ be positive integers. We also fix $a_1,a_2\in\C^{\star}$. We define $r$ as in (\ref{e131}).

We first recall the notion of a good covering and justify the geometric choices involved in the framework of our problem.

\begin{defin}
Let $(\mathcal{E}_{i})_{0\le i\le \nu-1 }$ be a finite family of open sectors such that $\mathcal{E}_{i}$ has its vertex at the origin and finite radius $r_{\mathcal{E}_{i}}>0$ for every $0\le i\le \nu-1$. We say this family conforms a good covering in $\C^{\star}$ if $\mathcal{E}_{i}\cap\mathcal{E}_{i+1}\neq\emptyset$ for $0\le i\le \nu-1$ (we put $\mathcal{E}_{\nu}:=\mathcal{E}_{0}$) and $\cup_{0\le i\le \nu-1}\mathcal{E}_{i}=\mathcal{U}\setminus\{0\}$ for some neighborhood of the origin $\mathcal{U}$. 
\end{defin}

Without loss of generality, one can consider $r_{\mathcal{E}_{i}}:=r_{\mathcal{E}}$ for every $0\le i\le \nu-1$, for some positive real number $r_{\mathcal{E}}$, for our study is local at 0.

\begin{defin}\label{defi3}
Let $(\mathcal{E}_{i})_{0\le i\le \nu-1}$ be a good covering in $\C^{\star}$.
For every $0\le i\le \nu-1$, we assume 
$$\mathcal{E}_{i}=\{\epsilon\in\C^{\star}:|\epsilon|<r_{\mathcal{E}},\theta_{1,\mathcal{E}_{i}}<\arg(\epsilon)<\theta_{2,\mathcal{E}_{i}}\},$$
for some $r_{\mathcal{E}}>0$ and $0\le\theta_{1,\mathcal{E}_{i}}<\theta_{2,\mathcal{E}_{i}}<2\pi$. We write $d_{\mathcal{E}_{i}}$ for the bisecting direction of $\mathcal{E}_{i}$, $(\theta_{1,\mathcal{E}_{i}}+\theta_{2,\mathcal{E}_{i}})/2$. Let $\mathcal{T}$ be an open sector with vertex at 0 and finite radius, say $r_{\mathcal{T}}>0$. We also fix a family of open sectors
$$S_{d_{i},\theta,r_{\mathcal{E}}^{r}r_{\mathcal{T}}}=\left\{t\in\mathbb{C}^{\star}: |t|\le r_{\mathcal{E}}^{r}r_{\mathcal{T}},|d_i-\arg(t)|<\frac{\theta}{2} \right\},$$
with $d_{i}\in[0,2\pi)$ for $0\le i\le \nu-1$, and $\pi/k<\theta<\pi/k+\delta$, for some small enough $\delta>0$, under the following properties:
\begin{enumerate}
\item one has $\arg\left(d_i\right)\neq \frac{\pi(2j+1)+\arg(a_2)}{ks_2}$, for every $j=0,...,ks_2-1$.
\item one has $|\arg(d_{i})-d_{\mathcal{E}_{i},j}|>\delta_{2i}$, for $j=0,...,ks_1-1$, where $\delta_{2i}:=\frac{s_1r_2-s_2r_1}{ks_1s_2}\left(\theta_{2,\mathcal{E}_{i}}-\theta_{1,\mathcal{E}_{i}}\right)$, and $d_{\mathcal{E}_{i},j}=\frac{1}{ks_1}\left(\pi(2j+1)+\arg(a_1)+\frac{s_1r_2-s_2r_1}{s_2}\left(\frac{\theta_{1,\mathcal{E}_{i}}+\theta_{2,\mathcal{E}_{i}}}{2}\right)\right)$.
\item for every $0\le i\le \nu-1$, $t\in\mathcal{T}$ and $\epsilon\in\mathcal{E}_{i}$, one has $\epsilon^{r}t\in S_{d_{i},\theta,r_{\mathcal{E}}^{r}r_{\mathcal{T}}}$.
\end{enumerate}
Under the previous settings, we say the family $\{(S_{d_{i},\theta,r_{\mathcal{E}}^{r}r_{\mathcal{T}}})_{0\le i\le \nu-1},\mathcal{T}\}$ is associated to the good covering $(\mathcal{E}_{i})_{0\le i\le \nu-1}$.
\end{defin}

\textbf{Remark:} The previous construction is feasible under suitable choices for the elements involved. For example, if $\mathcal{T}$ is bisected by the positive real line and has a small enough opening, one can choose the constants in the definition of $\delta_{2i}$ such that $\delta_{2i}$ allows the third condition in the previous definition to be satisfied for every $0\le i \le \nu-1$ without falling into a forbidden direction. From Assumption (C), these forbidden directions do not cover $[0,2\pi)$.

Let us consider a good covering in $\C^{\star}$, $(\mathcal{E}_{i})_{0\le i\le \nu-1}$. In the following, we identify the first element $\mathcal{E}_{0}$ with $\mathcal{E}_{\nu}$.

Let $S\ge1$ be an integer. We also consider a finite subset $\mathcal{S}$ of $\N^3$, and for every $(s,\kappa_0,\kappa_1)\in\mathcal{S}$, let $b_{\kappa_0\kappa_1}(z,\epsilon)$ be as stated in Section~\ref{seccion3}, under the form (\ref{e243}).

For each $0\le i\le \nu-1$, we study the Cauchy problem
\begin{equation}\label{e500}
(\epsilon^{r_2}(t^{k+1}\partial_t)^{s_2}+a_2)(\epsilon^{r_1}(t^{k+1}\partial_t)^{s_1}+a_1)\partial_{z}^{S}X_{i}(t,z,\epsilon)
\end{equation}
$$=\sum_{(s,\kappa_0,\kappa_1)\in\mathcal{S}}b_{\kappa_0\kappa_1}(z,\epsilon)t^s(\partial_{t}^{\kappa_0}\partial_{z}^{\kappa_1}X_{i})(t,z,\epsilon),$$
for given initial conditions
\begin{equation}\label{e504}
(\partial_{z}^{j}(X_{i}))(t,0,\epsilon)=\phi_{i,j}(t,\epsilon),\quad 0\le j\le S-1,
\end{equation}
where the functions $\phi_{i,j}(t,\epsilon)$ are constructed in the following way:

Let $\{(S_{d_{i},\theta,r^{r}_{\mathcal{E}r_{\mathcal{T}}}}),\mathcal{T}\}$ be a family associated to the good covering $(\mathcal{E}_{i})_{0\le i\le \nu-1}$. For the sake of simplicity in the notation, we will denote $S_{d_{i},\theta,r_{\mathcal{E}^{r}r_{\mathcal{T}}}}$ by $S_{d_{i}}$ from now on, for every $0\le i\le\nu-1$. 

Let $j\in\{0,...,S-1\}$ and $i\in\{0,...,\nu-1\}$. We consider the construction in Section~\ref{seccion1} for the sets $\Omega(\epsilon)$, for a common sector $S_{d_{i}}$ for every $\epsilon\in\mathcal{E}_i$ and define $W_{ij}(\tau,\epsilon)$ such that: 

\begin{itemize}
\item[a)] For every $\epsilon\in\mathcal{E}_{i}$, the function $\tau\mapsto W_{i,j}(\tau,\epsilon)$ is an element in $F_{j,\epsilon,\Omega(\epsilon)}$, with 
\begin{equation}\label{nosesiesnecesaria}
\left\|W_{i,j}(\tau,\epsilon)\right\|_{j,\epsilon,\Omega(\epsilon)}<M_0, 
\end{equation}
for some $M_0>0$.
\item[b)] The function $(\tau, \epsilon)\mapsto W_{i,j}(\tau,\epsilon)$ is a holomorphic function in $\cup_{\epsilon\in\mathcal{E}_{i}}\Omega(\epsilon)\times \mathcal{E}_{i}$.
\item[c)] The function $W_{i,j}(\tau,\epsilon)$ coincides with $W_{i+1,j}(\tau,\epsilon)$ in the domain $\cup_{\epsilon\in(\mathcal{E}_{i}\cap\mathcal{E}_{i+1})}\Omega(\epsilon)\times(\mathcal{E}_{i}\cap\mathcal{E}_{i+1})$. 
\end{itemize}

Let $\gamma_{i}\in[0,2\pi)$ be chosen in such a way that the set $L_{\gamma_{i}}:=\R_{+}e^{\gamma_{i}\sqrt{-1}}\subseteq S_{d}\cup\{0\}$. Then, we define 
\begin{equation}\label{e525}
\phi_{i,j}(t,\epsilon)=Y_{i,j}(\epsilon^r t,\epsilon):=k\int_{L_{\gamma_{i}}}W_{i,j}(u,\epsilon)e^{-\left(\frac{u}{\epsilon^r t}\right)^{k}}\frac{du}{u},
\end{equation}
for every $(t,\epsilon)\in\mathcal{T}\times\mathcal{E}_{i}$. Regarding a), $\phi_{i,j}$ is well defined and from b) one has $\phi_{i,j}(t\epsilon,\epsilon)$ turns out to be a holomorphic function in $\mathcal{T}\times\mathcal{E}_{i}$.

The next assumption is more restrictive than Assumption (B.1)'. We adopt it and substitute (B.1)' for it in Assumption (B), for reasons that will be explained in the proof of Theorem~\ref{teo2}.


We are in conditions to construct the analytic solutions for the problem (\ref{e500}), (\ref{e504}).

\begin{theo}\label{teo2}
Let the initial data (\ref{e504}) be constructed as above. Under Assumptions (A), (B) and (C) on the geometry of the problem, and under Assumption (D) on the constants involved, the problem (\ref{e500}), (\ref{e504}) has a holomorphic and bounded solution $X_{i}(t,z,\epsilon)$ on $(\mathcal{T}\cup D(0,h'))\times D(0,R_0)\times \mathcal{E}_{i}$, for every $0\le i\le \nu-1$, for some $R_0,h'>0$. Moreover, there exist $0<h''<h'$, $K,M>0$ (not depending on $\epsilon$), such that
\begin{equation}\label{e536}
\sup_{\stackrel{t\in\mathcal{T}\cap D(0,h'')}{z\in D(0,\rho_0/2)}}\left|X_{i+1}(t,z,\epsilon)-X_{i}(t,z,\epsilon)\right|\le K\exp\left(-\frac{M}{|\epsilon|^{\hat{r}_i}}\right),
\end{equation}
for every $\epsilon\in\mathcal{E}_{i}\cap\mathcal{E}_{i+1}$, and some positive real number $\hat{r}_{i}$ which depends on $i$.
\end{theo}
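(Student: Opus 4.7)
The plan is to produce each $X_i$ by an application of Theorem~\ref{teo1}, identify the Borel-plane coefficients across overlaps, and then obtain (\ref{e536}) by deforming the Laplace contour $L_{\gamma_i}$ to $L_{\gamma_{i+1}}$ inside $\Omega(\epsilon)$. The rate $\hat{r}_i$ will be dictated by which singularity of the operator on the left-hand side of (\ref{e237}) the deformation has to skirt.

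First, for each fixed $\epsilon\in\mathcal{E}_i$ I solve (\ref{e237})-(\ref{e432}) with initial conditions $W_{i,j}(\tau,\epsilon)$ using Proposition~\ref{prop2}; the $m_k$-Laplace transform in direction $d_i$ of the resulting formal series then produces the candidate analytic solution
$$X_i(t,z,\epsilon) \;=\; k\sum_{\beta\ge 0}\int_{L_{\gamma_i}} W_{i,\beta}(u,\epsilon)\,e^{-(u/(\epsilon^r t))^k}\,\frac{du}{u}\,\frac{z^\beta}{\beta!},$$
as in Theorem~\ref{teo1}. Holomorphy and boundedness on $\mathcal{T}\times D(0,R_0)\times\mathcal{E}_i$ are provided directly by property (3) of Definition~\ref{defi3} together with Theorem~\ref{teo1}; the extension to $t\in D(0,h')$ is obtained by rotating the integration direction $\gamma_i$ within the opening $\delta$, which keeps the integral convergent for arbitrary $\arg t$ once $|t|$ is sufficiently small. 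Joint holomorphy in $\epsilon$ follows from property (b) of the $W_{i,j}$ and dominated convergence.

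For $\epsilon\in\mathcal{E}_i\cap\mathcal{E}_{i+1}$, property (c) of the initial data together with the uniqueness enforced by the recursion (\ref{e257}) gives $W_{i,\beta}(\tau,\epsilon) = W_{i+1,\beta}(\tau,\epsilon)$ on the common portion of their $\tau$-domains for every $\beta\ge 0$. Writing $W_\beta$ for this shared function and $T=\epsilon^r t$,
$$X_{i+1}(t,z,\epsilon) - X_i(t,z,\epsilon) \;=\; k\sum_{\beta\ge 0}\!\left(\int_{L_{\gamma_{i+1}}}\!-\int_{L_{\gamma_i}}\right)\! W_\beta(u,\epsilon)\,e^{-(u/T)^k}\,\frac{du}{u}\,\frac{z^\beta}{\beta!}.$$
Since $W_\beta$ is holomorphic in $\Omega(\epsilon)$ and the kernel decays exponentially at infinity, the closed contour $L_{\gamma_i}-L_{\gamma_{i+1}}$ can be contracted inside $\Omega(\epsilon)$ to a tight path $\Gamma_\epsilon$ hugging whichever forbidden component of $\mathbb{C}\setminus\Omega(\epsilon)$ separates $\gamma_i$ from $\gamma_{i+1}$.

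Two geometrically distinct situations arise. \textbf{Case (a)}, when a moving direction $d_{\mathcal{E},j}$ coming from the roots of $\epsilon^{r_1-s_1rk}(k\tau^k)^{s_1}+a_1$ lies between $\gamma_i$ and $\gamma_{i+1}$: here $\Gamma_\epsilon$ consists of two rays at arguments $d_{\mathcal{E},j}\pm\delta_2$ starting at radius $\rho(|\epsilon|)$ together with an arc of that radius. Bounding $|W_\beta|$ by (\ref{e434}) and $|e^{-(u/T)^k}|\le\exp(-\Delta|u|^k/(|\epsilon|^{rk}|t|^k))$, then choosing $|t|^k$ small enough that $\Delta/|t|^k-\sigma\xi(b)\ge \Delta/(2|t|^k)$, yields a contribution bounded by $CMZ_0^\beta\beta!\exp(-c\,\rho(|\epsilon|)^k/(|\epsilon|^{rk}|t|^k))$; using $\rho(x)^k=C'x^{(s_1r_2-s_2r_1)/(s_1s_2)}$ and $rk=r_2/s_2$ one finds $\rho(|\epsilon|)^k/|\epsilon|^{rk}=C''|\epsilon|^{-r_1/s_1}$, so this case produces $\hat{r}_i=r_1/s_1$. \textbf{Case (b)}, when a ray through a fixed root of $(k\tau^k)^{s_2}+a_2$ separates $\gamma_i$ and $\gamma_{i+1}$: the corresponding $\Gamma_\epsilon$ now lies at an $\epsilon$-independent radius $R>0$, so the same calculation gives $\exp(-cR^k/(|\epsilon|^{rk}|t|^k))$ and therefore $\hat{r}_i=r_2/s_2$. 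Summing in $\beta$ is controlled by the geometric factor $|z|<\rho_0/2<1/Z_0$ (shrinking $R_0$ if necessary), and all constants are $\epsilon$-uniform. The main technical obstacle is justifying the contour deformation: one must verify that between $L_{\gamma_i}$ and $L_{\gamma_{i+1}}$ exactly one component of $\mathbb{C}\setminus\Omega(\epsilon)$ is enclosed (which requires Assumption (C) on $\theta_{2,\mathcal{E}}-\theta_{1,\mathcal{E}}$ so that consecutive moving sectors in $\Omega_1(\epsilon)$ do not overlap), and that the common extension $W_\beta$ remains holomorphic along $\Gamma_\epsilon$; the dichotomy (a)/(b) then gives exactly the two Gevrey rates $r_1/s_1$ and $r_2/s_2$ announced by the multi-level structure of the theorem.
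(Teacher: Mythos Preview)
Your overall strategy coincides with the paper's: construct $X_i$ via Theorem~\ref{teo1}, match the Borel data on overlaps using property (c) and the recursion (\ref{e257}), and deduce (\ref{e536}) by a Cauchy deformation of $L_{\gamma_i}$ towards $L_{\gamma_{i+1}}$, distinguishing the fixed and moving singularities to obtain the two rates $r_2/s_2$ and $r_1/s_1$. This is the same mechanism as in the paper.

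There is, however, a genuine geometric gap in the contour you propose. Recall that the common function $W_\beta(\cdot,\epsilon)$ is only holomorphic on $(S_{d_i}\cup S_{d_{i+1}}\cup D(0,\rho_0))\setminus\Omega_1(\epsilon)$; in particular it is \emph{not} defined on the full angular sector between $\gamma_i$ and $\gamma_{i+1}$ at large radius, nor in a neighbourhood of the fixed roots of $(k\tau^k)^{s_2}+a_2$. Hence your ``tight path $\Gamma_\epsilon$ hugging the forbidden component'' need not lie in the domain of $W_\beta$: in Case~(a) the rays at arguments $d_{\mathcal{E},j}\pm\delta_2$ run to infinity along directions that generally fall outside $S_{d_i}\cup S_{d_{i+1}}$, and in Case~(b) the fixed singular point sits in the complement of $S_{d_i}\cup S_{d_{i+1}}\cup D(0,\rho_0)$, so there is nothing to ``hug''. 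The paper avoids this by keeping the infinite legs along the \emph{original} directions $\gamma_i$ and $\gamma_{i+1}$ (which are in $S_{d_i}$, $S_{d_{i+1}}$ by construction), truncating them at a small radius, and closing with an arc inside the disc: radius $\rho_0/2$ when only fixed singular directions intervene, and radius $\rho(|\epsilon|)/2$ when a moving one does. This decomposition into $I_1,I_2,I_3$ (resp.\ $I_4,I_5,I_6$) is what makes the estimate go through; once you replace your $\Gamma_\epsilon$ by this contour, the rest of your computation (including the identification $\rho(|\epsilon|)^k/|\epsilon|^{rk}=C|\epsilon|^{-r_1/s_1}$) matches the paper exactly.

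Two smaller points. First, the paper's case analysis has three branches, not two: if no singular direction of either kind lies between $\gamma_i$ and $\gamma_{i+1}$ the difference vanishes, and if \emph{both} kinds are present one simply uses the smaller arc radius $\rho(|\epsilon|)/2$ and obtains $\hat r_i=r_1/s_1$; your requirement that ``exactly one component'' be enclosed is unnecessary. Second, your claim that $X_i$ extends holomorphically to $t\in D(0,h')$ by rotating $\gamma_i$ within the opening $\delta$ is not quite right: the allowed directions $\gamma_i$ are constrained to $S_{d_i}$, and the extension to a full disc in $t$ is not asserted in the paper (the statement of the theorem actually carries a typo here, and the proof works on $\mathcal{T}\cap D(0,h')$).
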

\begin{proof}
Let $0\le i\le \nu-1$ and fix $\epsilon\in\mathcal{E}_{i}$. From Theorem~\ref{teo1}, the Cauchy problem (\ref{e385}), with initial conditions given by 
$$(\partial_{z}^{j}Y_{j})(T,0,\epsilon)=Y_{i,j}(T,\epsilon),\quad 0\le j\le S-1,$$
for the functions $Y_{i,j}$ defined in (\ref{e525}) admits a holomorphic solution $(T,z)\mapsto Y(T,z,\epsilon)$ defined in $S_{d_{i},\theta_{i},\Delta_{i1}|\epsilon|^{r}}\times D(0,\Delta_{i2}),$ for some $\Delta_{i1},\Delta_{i2}>0$ (recall (\ref{e411}) shows a definition of this set).

Moreover, condition b) in the construction of the initial data of the problem (\ref{e500}),(\ref{e504}), allows us to affirm this construction is also made holomorphically with respect to the perturbation parameter.

If we put $X_{i}(t,z,\epsilon)=Y(\epsilon^rt,z,\epsilon)$, then $X_{i}$ turns out to be a holomorphic function defined in $(\mathcal{T}\cap D(0,h'))\times D(0,R_0)\times \mathcal{E}_{i}$, for some $R_0,h'>0$, which turns out to be a solution of (\ref{e500}),(\ref{e504}) from its construction.

We now give proof for the estimates in (\ref{e536}). 

For every $(t,z,\epsilon)\in (\mathcal{T}\cup D(0,h'))\times D(0,R_0)\times ( \mathcal{E}_{i}\cap \mathcal{E}_{i+1})$, the difference of two solutions related to two consecutive sectors of the good covering in the perturbation parameter can be written in the form
\begin{equation}\label{e563}
X_{i+1}(t,z,\epsilon)-X_{i}(t,z,\epsilon)=\sum_{\beta\ge0}(X_{i+1,\beta}(t,\epsilon)-X_{i,\beta}(t,\epsilon))\frac{z^{\beta}}{\beta!},
\end{equation}
where 
$$X_{i,\beta}(t,\epsilon):=k\int_{L_{\gamma_{i}}}W_{\beta,i}(u,\epsilon)e^{-\left(\frac{u}{t\epsilon^r}\right)^{k}}\frac{du}{u},$$
with $(W_{i,\beta}(\tau,\epsilon))_{\beta\ge0}$ given by the recurrence (\ref{e257}), and with initial terms given by $W_{i,j}$ determined in the construction of the present Cauchy problem.

Before entering into details, it is worth mentioning the nature of the different values of $\hat{r}_{i}$, depending on $0\le i\le\nu-1$. Indeed,
\begin{equation}\label{e561}
\hat{r_{i}}\in\left\{\frac{r_2}{s_2},\frac{r_1}{s_1}\right\}.
\end{equation}
There are three different geometric situations one can find for each $0\le i\le \nu-1$:
\begin{enumerate}
\item If there are no singular directions $\frac{\pi(2j+1)+\arg(a_2)}{ks_2}$ for $j=0,...,ks_2-1$ (we will refer to such directions as singular directions of first kind) nor $\tilde{d}$ with $|\tilde{d}_{i}-\arg(d_{\mathcal{E}_{i},j})|\le\delta_{2i}$ for $j=0,...,ks_{1}$ (we will say these are singular directions of second kind) in between $\gamma_{i}$ and $\gamma_{i+1}$, then one can deform the path $L_{\gamma_{i+1}}-L_{\gamma_{i}}$ to a point by means of Cauchy theorem so that the difference $X_{i+1}-X_{i}$ is null. In this case, one can reformulate the problem by considering a new good covering combining $\mathcal{E}_{i}$ and $\mathcal{E}_{i+1}$ in a unique sector.

\item If there exists at least a singular direction of first kind but no singular directions of second kind in between $\gamma_{i}$ and $\gamma_{i+1}$, then the movable singularities depending on $\epsilon$ do not affect the geometry of the problem, whereas the path can only be deformed taking into account those singularities which do not depend on $\epsilon$. In this case $\hat{r}_i:=r_2/s_2$.

\item If there is at least a singular direction of second kind in between $\gamma_{i}$ and $\gamma_{i+1}$, then the movable singularities depend on $\epsilon$, and tend to zero. As a consequence, this affects the geometry of the problem, and the path deformation has to be made accordingly. In this case, $\hat{r}_i:=r_1/s_1$.
\end{enumerate}

Observe that Assumption (B.1) leads to $r_1/s_1<r_2/s_2$ so that the Gevrey order in the second scenary is always greater than in the third one, i.e. $\hat{r}_1\ge \hat{r}_2$.

We first consider the situation in which only singular directions of first kind appear. From c) in the construction of the initial conditions of the Cauchy problem, one can deform the integration path for the integrals in (\ref{e563}). For every $\epsilon\in\mathcal{E}_{i}\cap\mathcal{E}_{i+1}$ and $t\in\mathcal{T}\cap D(0,h')$ one has 
\begin{align*}
&X_{i+1,\beta}(t,\epsilon)-X_{i,\beta}(t,\epsilon)=k\int_{L_{\rho_{0}/2,\gamma_{i+1}}}W_{i+1,\beta}(u,\epsilon)e^{-\left(\frac{u}{t\epsilon^{r}}\right)^{k}}\frac{du}{u}\\
&-k\int_{L_{\rho_{0}/2,\gamma_{i}}}W_{i,\beta}(u,\epsilon)e^{-\left(\frac{u}{t\epsilon^{r}}\right)^{k}}\frac{du}{u}+k\int_{C(\rho_0/2,\gamma_i,\gamma_{i+1})}W_{i,i+1,\beta}(u,\epsilon)e^{-\left(\frac{u}{t\epsilon^{r}}\right)^{k}}\frac{du}{u}.
\end{align*}
Here, $L_{\rho_0/2,\gamma_{i+1}}:=[\frac{\rho_0}{2},+\infty)e^{\sqrt{-1}\gamma_{i+1}}$, $L_{\rho_0/2,\gamma_{i}}:=[\frac{\rho_0}{2},+\infty)e^{\sqrt{-1}\gamma_{i}}$ and $C(\rho_{0}/2,\gamma_i,\gamma_{i+1})$ is an arc of circle with radius $\rho_0/2$ connecting $\rho_{0}/2e^{\sqrt{-1}\gamma_{i+1}}$ and $\rho_{0}/2e^{\sqrt{-1}\gamma_{i}}$ with a well chosen orientation. Moreover, $W_{i,i+1,\beta}$ denotes the function $W_{i,\beta}$ in an open domain which contains the closed path $(L_{\gamma_{i+1}}\setminus L_{\rho_0/2,\gamma_{i+1}})-C(\rho_{0}/2,\gamma_i,\gamma_{i+1})-(L_{\gamma_{i}}\setminus L_{\rho_0/2,\gamma_{i}})$, in which $W_{i,\beta}$ and $W_{i+1,\beta}$ coincide. This is a consequence of c) in the construction of the initial data for our problem.

We first give estimates for $I_1:=k\left|\int_{L_{\rho_{0}/2,\gamma_{i}}}W_{i,\beta}(u,\epsilon)e^{-\left(\frac{u}{t\epsilon^{r}}\right)^{k}}\frac{du}{u}\right|$. The corresponding ones for $I_3:=k\left|\int_{L_{\rho_{0}/2,\gamma_{i+1}}}W_{i+1,\beta}(u,\epsilon)e^{-\left(\frac{u}{t\epsilon^{r}}\right)^{k}}\frac{du}{u}\right|$ follow the same argument, so we omit them.

$$I_{1}\le k\int_{\rho_0/2}^{\infty}|W_{i,\beta}(se^{\sqrt{-1}\gamma_{i}},\epsilon)|\exp\left(-\frac{s^{k}}{|t|^{k}|\epsilon|^{rk}}\cos(k(\gamma_{i}-\arg(t)-r\arg(\epsilon)))\right)ds.$$
Direction $\gamma_{i}$ was chosen depending on $\epsilon^{r}t$, in order that a positive real number $\Delta$ exists with $\cos(k(\gamma_{i}-t-r\arg(\epsilon)))\ge \Delta>0$, for every $\epsilon\in\mathcal{E}_{i}\cap\mathcal{E}_{i+1}$ and $t\in\mathcal{T}\cap D(0,h')$. Bearing in mind that a) in the construction of the initial conditions holds, there exist $M_0,Z_0>0$ such that
$$I_{1}\le kM_0Z_0^{\beta}\beta!\int_{\rho_0/2}^{\infty}\frac{\frac{s}{|\epsilon|^{r}}}{1+\frac{s^{2k}}{|\epsilon^{r}|^{2k}}}\exp\left(\sigma \xi(b)\frac{s^{k}}{|\epsilon|^{rk}}\right)\exp\left(-\frac{s^{k}\Delta}{|t|^{k}|\epsilon|^{rk}}\right)ds.$$
Indeed, if $h'<\left(\frac{\Delta}{\sigma\xi(b)+\Delta_1}\right)^{1/k}$ for some $\Delta_1>0$, the previous expression is upper bounded by
$$kM_0Z_0^{\beta}\beta!\int_{\rho_0/2}^{\infty}\frac{s}{|\epsilon|^{r}}\exp(-\Delta_1\frac{s^{k}}{|\epsilon|^{rk}})ds.$$
Taking into account that $k\ge2$ and $s\ge \rho_0/2$ one has $s^{2-k}\le (\rho_0/2)^{2-k}$. The previous expression equals
\begin{align}
&kM_0Z_0^{\beta}\beta!\int_{\rho_0/2}^{\infty}\frac{s^{2-k}(-k)s^{k-1}}{|\epsilon|^{r}}\exp(-\Delta_1\frac{s^{k}}{|\epsilon|^{rk}})ds\nonumber\\
&= kM_0Z_0^{\beta}\beta!|\epsilon|^{r(k-1)}\frac{(-1)}{\Delta_1 k}\int_{\rho_0/2}^{\infty}s^{2-k}\frac{(-k)s^{k-1}\Delta_1}{|\epsilon|^{rk}}\exp(-\Delta_1\frac{s^{k}}{|\epsilon|^{rk}})ds\nonumber\\
&\le kM_0Z_0^{\beta}\beta!|\epsilon|^{r(k-1)}\frac{(-1)}{\Delta_1 k}(\rho_0/2)^{2-k}\exp(-\Delta_1\frac{s^{k}}{|\epsilon|^{rk}})\left.\right|_{s=\rho_0/2}^{s\to\infty}\nonumber\\
&=kM_0Z_0^{\beta}\beta!|\epsilon|^{r(k-1)}\frac{1}{\Delta_1 k}(\rho_0/2)^{2-k}\exp(-\Delta_1(\rho_0/2)^{k}\frac{1}{|\epsilon|^{rk}})\nonumber\\
&\le M_{1}Z_0^{\beta}\beta!\exp\left(-\frac{K_1}{|\epsilon|^{rk}}\right),\label{e604}
\end{align}
for some $M_1,K_1>0$.

Analogous steps as before for the estimation of
$$I_2=k\left|\int_{C(\rho_0/2,\gamma_{i},\gamma_{i+1})}W_{i,i+1,\beta}(u,\epsilon)e^{-\left(\frac{u}{t\epsilon^{r}}\right)^{k}}\frac{du}{u}\right|$$
yield
\begin{equation}\label{e611}
I_2\le M_{2}Z_{0}^{\beta}\beta!\exp\left(-\frac{K_2}{|\epsilon|^{rk}}\right),
\end{equation}
whenever $t\in\mathcal{T}\cap D(0,h')$ for some $M_2,K_2>0$.
>From (\ref{e604}) and (\ref{e611}) one concludes there exist $M,K>0$ such that
\begin{equation}\label{e616}
|X_{i+1,\beta}(t,\epsilon)-X_{i,\beta}(,\epsilon)|\le MZ_0^\beta\beta!\exp\left(-\frac{K}{|\epsilon|^{\hat{r}_{i}}}\right),
\end{equation}
for every $\beta\ge0$, $t\in\mathcal{T}\cap D(0,h')$, $\epsilon\in\mathcal{E}_{i}\cap\mathcal{E}_{i+1}$, and with $\hat{r}_{i}=r_2/s_2$.

We now study the third situation which can occur, it is to say, that in which at least a singular direction of second kind lies in between the directions $\gamma_i$ and $\gamma_{i+1}$. Now, the coefficients appearing in the series in (\ref{e563}) are such that the integration path under consideration in the definition of the Laplace transforms is deformed in a different way. Indeed, one can write for every $\epsilon\in\mathcal{E}_{i}\cap\mathcal{E}_{i+1}$, $t\in\mathcal{T}\cap D(0,h')$, that
\begin{align*}
& X_{i+1,\beta}(t,\epsilon)-X_{i,\beta}(t,\epsilon)=k\int_{L_{\rho(|\epsilon|)/2,\gamma_{i+1}}}W_{i+1,\beta}(u,\epsilon)e^{-\left(\frac{u}{t\epsilon^{r}}\right)^{k}}\frac{du}{u}\\
&-k\int_{L_{\rho(|\epsilon|)/2,\gamma_{i}}}W_{i,\beta}(u,\epsilon)e^{-\left(\frac{u}{t\epsilon^{r}}\right)^{k}}\frac{du}{u}+k\int_{C(\rho(|\epsilon|)/2,\gamma_i,\gamma_{i+1})}W_{i,i+1,\beta}(u,\epsilon)e^{-\left(\frac{u}{t\epsilon^{r}}\right)^{k}}\frac{du}{u}.
\end{align*}
Here, the paths are  $L_{\rho(|\epsilon|)/2,\gamma_{i+1}}:=[\rho(|\epsilon|)/2,+\infty)e^{\sqrt{-1}\gamma_{i+1}}$, $L_{\rho(|\epsilon|)/2,\gamma_{i}}:=[\frac{\rho(|\epsilon|)}{2},+\infty)e^{\sqrt{-1}\gamma_{i}}$ and $C(\rho(|\epsilon|)/2,\gamma_i,\gamma_{i+1})$ is an arc of circle with radius $\rho(|\epsilon|)/2$ connecting $\rho(|\epsilon|)/2e^{\sqrt{-1}\gamma_{i+1}}$ and $\rho(|\epsilon|)/2e^{\sqrt{-1}\gamma_{i}}$ with a well chosen orientation.

We omit most of the calculs to estimate $I_4:=k\left|\int_{L_{\rho(|\epsilon|)/2,\gamma_{i+1}}}W_{i+1,\beta}(u,\epsilon)e^{-\left(\frac{u}{t\epsilon^{r}}\right)^{k}}\frac{du}{u}\right|$, $I_5:=k\left|\int_{L_{\rho(|\epsilon|)/2,\gamma_{i}}}W_{i,\beta}(u,\epsilon)e^{-\left(\frac{u}{t\epsilon^{r}}\right)^{k}}\frac{du}{u}\right|$ and  $I_6:=k\left|\int_{C(\rho(|\epsilon|)/2,\gamma_i,\gamma_{i+1})}W_{i,\beta}(u,\epsilon)e^{-\left(\frac{u}{t\epsilon^{r}}\right)^{k}}\frac{du}{u}\right|$
for they follow analogous steps as in the first case under study. Indeed, bounds for $I_4$ and $I_5$ can be obtained under the same arguments. For the study of $I_4$, one can follow the first same steps as in the estimates for $I_1$ to get that
$$I_4\le kM_2Z_0^{\beta}\beta!\exp\left(-\Delta_2\frac{\rho(|\epsilon|)^k}{|\epsilon|^{rk}}\right),$$
for some $M_2,\Delta_2>0$ not depending on $\epsilon$.
One has
$$\frac{\rho(|\epsilon|)^{k}}{|\epsilon|^{rk}}=\frac{|a_{1}|^{\frac{1}{s_1}}|\epsilon|^{\frac{s_1r_2-s_2r_1}{s_1s_2}}}{2k|\epsilon|^{\frac{r_2}{s_2}}}
=\frac{|a_{1}|^{\frac{1}{s_1}}}{2k}|\epsilon|^{-\frac{r_1}{s_1}},$$
which yields the existence of positive constants $M_3,K_3$ such that
$$I_4\le M_3Z_0^{\beta}\beta!\exp\left(-\frac{K_3}{|\epsilon|^{\frac{r_1}{s_1}}}\right),$$
for $t\in\mathcal{T}\cap D(0,h')$. We also omit the study of $I_{5}$ for the previous study can be reproduced. In view of these results, one can conclude that, in the case of a movable singularity between the arguments $\gamma_{i}$ and  $\gamma_{i+1}$, it is to say in the third case considered, one concludes there exist $M,K>0$ such that
\begin{equation}\label{e638}
|X_{i+1,\beta}(t,\epsilon)-X_{i,\beta}(t,\epsilon)|\le MZ_0^\beta\beta!\exp\left(-\frac{K}{|\epsilon|^{\hat{r}_{i}}}\right),
\end{equation}
for every $\beta\ge0$, for $t\in\mathcal{T}\cap D(0,h')$, $\epsilon\in\mathcal{E}_{i}\cap\mathcal{E}_{i+1}$, for $\hat{r}_{i}:=\frac{r_1}{s_1}$.

In view of (\ref{e616}) and (\ref{e638}), one can plug this information into (\ref{e563}) to conclude there exist $M,K>0$ such that
$$|X_{i+1}(t,z,\epsilon)-X_{i}(t,z,\epsilon)|\le M\sum_{\beta\ge0}Z_{0}^{\beta}|z|^{\beta}\exp\left(-\frac{K}{|\epsilon|^{\hat{r}_i}}\right)<M\sum_{\beta\ge0}(1/2)^{\beta}\exp\left(-\frac{K}{|\epsilon|^{\hat{r}_i}}\right),$$
for every $t\in\mathcal{T}\cap D(0,\rho_0/2)$, every $z\in D(0,1/(2Z_0))$ and all $\epsilon\in\mathcal{E}_{i}\cap\mathcal{E}_{i+1}$, for every $0\le i\le \nu-1$. This yields the result.
\end{proof}
 
\section{Existence of formal series solutions in the complex parameter and asymptotic expansions in two levels}

\subsection{ A Ramis-Sibuya theorem with two levels}\label{secrs}
The different behavior of the difference of two solutions with respect to the perturbation parameter in the intersection of adjacent sectors of the good covering studied in Theorem~\ref{teo2} provides two different levels in the asymptotic approximation of the analytic solution in the variable $\epsilon$. This behavior has also appeared in the previous work by the second author~\cite{ma0} when studying a family of singularly perturbed difference-differential nonlinear partial differential equations, where small delays depending on the perturbation parameter occur in the time variable.

\begin{defin}
Let $(\mathbb{E},\left\|\cdot\right\|_{\mathbb{E}})$ be a complex Banach space and $\mathcal{E}$ be an open and bounded sector with vertex at 0. We also consider a positive real number $\alpha$.

We say that a function $f:\mathcal{E}\to\mathbb{E}$, holomorphic on $\mathcal{E}$, admits a formal power series $\hat{f}(\epsilon)=\sum_{k\ge0}a_{k}\epsilon^{k}\in\mathbb{E}[[\epsilon]]$ as its $\alpha-$Gevrey asymptotic expansion if, for any closed proper subsector $\mathcal{W}\subseteq\mathcal{E}$ with vertex at the origin, there exist $C,M>0$ such that
$$\left\|f(\epsilon)-\sum_{k=0}^{N-1}a_{k}\epsilon^k\right\|_{\mathbb{E}}\le CM^{N}N!^{1/\alpha}|\epsilon|^{N},$$
for every $N\ge 1$, and all $\epsilon\in\mathcal{W}$.
\end{defin} 

In this section, we state a new version of the classical Ramis-Sibuya theorem (see~\cite{hssi}, Theorem XI-2-3) in two different Gevrey levels. We have decided to include the proof, which follows analogous steps as the one in~\cite{ma0} for a Gevrey level and the $1^{+}$ level, for the sake of clarity and a self-contained argumentation. In addition to this, the enunciate is written in terms of just two different Gevrey levels in order to fit our necessities, but there is no additional difficulty on considering any finite number of different levels.

\textbf{Theorem (RS)} Let $(\mathbb{E},\left\|\cdot\right\|_{\mathbb{E}})$ be a complex Banach space, and let $(\mathcal{E}_{i})_{0\le i\le \nu-1}$ be a good covering in $\C^{\star}$. We assume $G_i:\mathcal{E}_{i}\to\mathbb{E}$ is a holomorphic function for every $0\le i\le \nu-1$ and we put $\Delta_{i}(\epsilon)=G_{i+1}(\epsilon)-G_{i}(\epsilon)$ for every $\epsilon\in Z_{i}:=\mathcal{E}_{i}\cap\mathcal{E}_{i+1}$.

Here we have made the identification of the elements with index $\nu$ with the corresponding ones under index $0$. 

Moreover, we assume

\textbf{1)} The functions $G_{i}(\epsilon)$ are bounded as $\epsilon\in\mathcal{E}_{i}$ tends to the origin, for every $0\le i\le \nu-1$.

\textbf{2)} We consider $\hat{r}_1>0$ and $\hat{r}_{2}>0$, and two nonempty subsets of $\{0,...,\nu-1\}$, say $I_1$ and $I_2$, such that $I_{1}\cap I_2=\emptyset$ and $I_1\cup I_2=\{0,...,\nu-1\}$. For every $j=1,2,$ and every $i\in I_{j}$ there exist $K_i,M_i>0$ such that
$$\left\|\Delta_{i}(\epsilon)\right\|_{\mathbb{E}}\le K_i e^{-\frac{M_i}{|\epsilon|^{\hat{r}_{j}}}},$$
for every $\epsilon\in Z_i$.

Then, there exists a convergent power series $a(\epsilon)\in\mathbb{E}\{\epsilon\}$ defined on some neighborhood of the origin and $\hat{G}^{1}(\epsilon), \hat{G}^{2}(\epsilon)\in\mathbb{E}[[\epsilon]]$ such that $G_i$ can be written in the form
\begin{equation}\label{e678}
G_i(\epsilon)=a(\epsilon)+G^{1}_{i}(\epsilon)+G^{2}_{i}(\epsilon),
\end{equation}
where $G^{j}_{i}(\epsilon)$ is holomorphic on $\mathcal{E}_{i}$ and has $\hat{G}^{j}(\epsilon)$ as its $\hat{r}_{j}$-Gevrey asymptotic expansion on $\mathcal{E}_{i}$, for $j=1,2,$ and $i\in\{0,...,\nu-1\}$.

\begin{proof}
For every $0\le i\le \nu-1$ we define the holomorphic cocycles $\Delta_{i}^{j}(\epsilon)$ on the sectors $Z_i$ by
$$\Delta_{i}^{j}(\epsilon)=\Delta_{i}(\epsilon)\delta_{ij},\quad j=1,2.$$
Here, $\delta_{ij}$ is a Kronecker type function with value 1 if $i\in I_j$ and 0 otherwise.

A direct consequence of Lemma XI-2-6 from~\cite{hssi} provided by the classical Ramis-Sibuya theorem in Gevrey classes is that for every $0\le i\le \nu-1$ and for $j=1,2$, there exist holomorphic functions $\Psi_{i}^{j}:\mathcal{E}_{i}\to\C$ such that
$$\Delta_{i}^{j}(\epsilon)=\Psi_{i+1}^{j}(\epsilon)-\Psi_{i}^{j}(\epsilon)$$
for every $\epsilon\in Z_{i}$, where by convention $\Psi_{\nu}^{j}(\epsilon)=\Psi_{0}^{j}(\epsilon)$. Moreover, there exist formal power series $\sum_{m\ge0}\phi_{m,j}\epsilon^{m}\in\mathbb{E}[[\epsilon]]$ such that for each $0\le \ell\le \nu-1$ and any closed proper subsector $\mathcal{W}\subseteq\mathcal{E}_{l}$ with vertex at 0, there exist $\breve{K}_{\ell},\breve{M}_{\ell}>0$ with
$$\left\|\Psi_{\ell}^{j}(\epsilon)-\sum_{m=0}^{M-1}\phi_{m,j}\epsilon^{m}\right\|_{\mathbb{E}}\le \breve{K}_{\ell}(\breve{M}_{\ell})^{M}M!^{1/\hat{r}_{j}}|\epsilon|^{M},$$
for every $\epsilon\in\mathcal{W}$, and all positive integer $M$.

We consider the bounded holomorphic functions $a_{i}(\epsilon)=G_{i}(\epsilon)-\Psi_{i}^{1}(\epsilon)-\Psi_{i}^{2}(\epsilon),$ for every $0\le i\le \nu-1$, and $\epsilon\in\mathcal{E}_{i}$. For every $0\le i\le \nu-1$ we have 
$$a_{i+1}(\epsilon)-a_{i}(\epsilon)=G_{i+1}(\epsilon)-G_{i}(\epsilon)-\Delta_{i}^{1}(\epsilon)-\Delta_{i}^{2}(\epsilon)=G_{i+1}(\epsilon)-G_{i}(\epsilon)-\Delta_i(\epsilon)=0,$$
for $\epsilon\in Z_{i}$. Therefore, there exists a holomorphic function $a(\epsilon)$ defined on $\mathcal{U}\setminus\{0\}$, for some neighborhood of the origin $\mathcal{U}$ such that $a_{i}(\epsilon)=a(\epsilon)$ for every $0\le i\le \nu-1$. Since $a(\epsilon)$ is bounded on this domain, 0 turns out to be a removable singularity, and $a(\epsilon)$ defines a holomorphic function on $\mathcal{U}$.

Finally, one can write
$$G_{i}(\epsilon)=a(\epsilon)+\Psi_{i}^{1}(\epsilon)+\Psi_{i}^{2}(\epsilon),$$
for $\epsilon\in\mathcal{E}_{i}$, and every $0\le i\le \nu-1$. Moreover, $\Psi_{i}^{j}(\epsilon)$ admits $\hat{G}^{j}(\epsilon)=\sum_{m\ge0}\phi_{m,j}\epsilon^m$ as its $\hat{r}_{j}$-Gevrey asymptotic expansion on $\mathcal{E}_{i}$, for $j=1,2$.
\end{proof}

\textbf{Remark:} 
We put $\hat{r}_{2}:=r_{1}/s_1$ and $\hat{r}_{1}:=r_{2}/s_2$ and recall that $\hat{r}_2\le\hat{r}_1$. Assume that a sector $\mathcal{E}_{i}$ has opening a bit larger than
$\pi /\hat{r}_{1}$ and if $i \in I_{1}$ is such that
$I_{\delta_{1},i,\delta_{2}} = \{i-\delta_{1},\ldots,i,\ldots,i+\delta_{2} \} \subset I_{1}$ for some integers
$\delta_{1},\delta_{2} \geq 0$ and with the property that
\begin{equation}\label{e782}
 \mathcal{E}_{i} \subset S_{\pi/\hat{r}_{2}} \subset \bigcup_{h \in I_{\delta_{1},i,\delta_{2}}} \mathcal{E}_{h} 
 \end{equation}
where $S_{\pi/\hat{r}_{2}}$ is a sector centered at 0 with aperture a bit larger than $\pi/\hat{r}_{2}$.
Then, from the proof of Theorem (RS), we see that in the decomposition (43), the function $G_{i}^{2}(\epsilon)$ can be analytically
continued on the sector $S_{\pi/\hat{r}_{2}}$ and has the formal series $\hat{G}^{2}(\epsilon)$ as Gevrey asymptotic expansion of
order $\hat{r}_{2}$ on $S_{\pi/\hat{r}_{2}}$. Hence, $G_{i}^{2}(\epsilon)$ is the $\hat{r}_{2}-$sum of $\hat{G}^{2}(\epsilon)$
on $S_{\pi/\hat{r}_{2}}$ in the sense of the definition given in \cite{ba}, Section 3.2. Moreover, the function
$G_{i}^{1}(\epsilon)$ has $\hat{G}^{1}(\epsilon)$ as $\hat{r}_{1}-$Gevrey asymptotic expansion on $\mathcal{E}_{i}$, meaning
that $G_{i}^{1}$ is the $\hat{r}_{1}-$sum of $\hat{G}^{1}(\epsilon)$ on $\mathcal{E}_{i}$.

In other words, using the characterisation of multisummability given in \cite{ba}, Theorem 1 p. 57, the formal series $\hat{G}(\epsilon)$ is $(\hat{r}_{1},\hat{r}_{2})-$summable on $\mathcal{E}_{i}$ and its $(\hat{r}_{1},\hat{r}_{2})-$sum is the function $G_{i}(\epsilon)$ on $\mathcal{E}_{i}$.


The question that naturally arises is whether such situation can hold for certain practical situation. The answer is positive. 

Let us assume that $s_{1}=1$ and $s_{2}$ is much larger than $1$. We denote
$ap(\mathcal{E}_{i})$ the aperture of $\mathcal{E}_{i}$.
We assume that $\mathcal{T}$ is a small and thin sector that is bisected by the positive real axis. Then, from the third property in Definition~\ref{defi3}, we can assume that $ap(\mathcal{E}_{i})$ is slightly larger than $\pi/(r_{2}/s_{2})$ for some element in the good covering
$(\mathcal{E}_{i})_{0 \leq i \leq \nu-1}$. Taking into account Assumption (C), we take
$$
\frac{2\pi s_{2}}{s_{1}r_{2} - s_{2}r_{1}} > ap(\mathcal{E}_{i}) > \pi/(r_{2}/s_{2})
$$
Hence,
$$ r_{2} > \frac{s_{1}r_{2} - s_{2}r_{1}}{2} $$
>From Assumption (B.1)', we also need that $s_{1}r_{2} - s_{2}r_{1} > s_{2}$, which means under these settings that
\begin{equation}
r_{2}/s_{2} > r_{1}+1  \label{ast}
\end{equation}

Now, the consecutive ``movable'' roots of $P_{\epsilon,1}(\tau) =
\epsilon^{r_{1} - s_{1}rk}(k \tau^{k})^{s_1} + a_{1}$ are separated by an angle of
$2\pi/ks_{1}=2\pi/k$. The consecutive ``fixed'' roots of $P_{2}(\tau)$ are separated by an angle of $2\pi/(ks_{2})$.

If $s_{2}$ is much larger than 1, in between two consecutive roots of
$P_{\epsilon,1}(\tau)$ one can find at least more than two consecutive roots of $P_{2}(\tau)$ (the number of roots of $P_{2}(\tau)$ is far larger than the number of roots of $P_{\epsilon,1}(\tau)$)

We observe that the difference of any two neighboring solutions $X_{i}$, $X_{i+1}$ obtained as Laplace transform along directions $d_{i}$, $d_{i+1}$ lies between these
"fixed" roots is of exponential decay of order $r_{2}/s_{2}$. Hence, such consecutive integers $i$, $i+1$ belong to the subset $I_{1}$ (with the notation at the beginning of the remark) and the aperture of the sectors $\mathcal{E}_{i}$, $\mathcal{E}_{i+1}$ are larger than $\pi/(r_{2}/s_{2})$.
In addition, we observe that if $r_{2}/s_{2}$ is not too large compared to $r_{1}$ in (\ref{ast}), then
the union of the $\mathcal{E}_{i}$ over these aformentioned indices $i$ can contain a sector
$S_{\pi/r_{1}}$ of aperture $\pi/r_{1}$. 

In other words, we are in the configuration (\ref{e782}).

\subsection{Existence of formal power series solutions in the complex parameter}

The main result of this work states the existence of a formal power series in $\epsilon$ which can be splitted in two formal power series, each one linked to one of the different types of singularities appearing in the problem. In addition to this, the analytic solution is written as the sum of two functions which are represented by the forementioned formal power series under some Gevrey type asymptotics.

\begin{theo}\label{teopral}
Under Assumptions (A), (B) and (C) on the geometric configuration of our problem under study, and under Assumption (D), there exists a formal power series 
\begin{equation}\label{e723}
\hat{X}(t,z,\epsilon)=\sum_{\beta\ge0}H_{\beta}(t,z)\frac{\epsilon^{\beta}}{\beta!}\in\mathbb{E}[[\epsilon]],
\end{equation}
where $\mathbb{E}$ stands for the Banach space of holomorphic and bounded functions on the set $(\mathcal{T}\cap D(0,h''))\times D(0,R_0)$ equipped with the supremum norm, for some $h'',R_0>0$ provided by Theorem~\ref{teo2}, which formally solves the equation
\begin{equation}\label{e707}
(\epsilon^{r_2}(t^{k+1}\partial_t)^{s_2}+a_2)(\epsilon^{r_1}(t^{k+1}\partial_t)^{s_1}+a_1)\partial_{z}^{S}\hat{X}(t,z,\epsilon)
\end{equation}
$$=\sum_{(s,\kappa_0,\kappa_1)\in\mathcal{S}}b_{\kappa_0\kappa_1}(z,\epsilon)t^s(\partial_{t}^{\kappa_0}\partial_{z}^{\kappa_1}\hat{X})(t,z,\epsilon).
$$
Moreover, $\hat{X}$ can be written in the form
$$\hat{X}(t,z,\epsilon)=a(t,z,\epsilon)+\hat{X}^{1}(t,z,\epsilon)+\hat{X}^{2}(t,z,\epsilon),$$
where $a(t,z,\epsilon)\in\mathbb{E}\{\epsilon\}$ is a convergent series on some neighborhood of $\epsilon=0$ and $\hat{X}^{1}(t,z,\epsilon)$, $\hat{X}^{2}(t,z,\epsilon)$ are elements in $\mathbb{E}[[\epsilon]]$. Moreover, for every $0\le i\le \nu-1$, the $\mathbb{E}$-valued function $\epsilon\mapsto X_{i}(t,z,\epsilon)$ constructed in Theorem~\ref{teo2} is of the form
\begin{equation}\label{e715}
X_{i}(t,z,\epsilon)=a(t,z,\epsilon)+X_{i}^{1}(t,z,\epsilon)+X_{i}^{2}(t,z,\epsilon),
\end{equation}
where $\epsilon\mapsto X_{i}^{j}(t,z,\epsilon)$ is a $\mathbb{E}$-valued function which admits $\hat{X}^{j}(t,z,\epsilon)$ as its $\hat{r}_{j}$-Gevrey asymptotic expansion on $\mathcal{E}_{i}$, for $j=1,2$. 
\end{theo}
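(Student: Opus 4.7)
The natural plan is to apply the Ramis--Sibuya theorem with two levels (Theorem (RS) of Section~\ref{secrs}) to the family of analytic solutions $\{X_i\}_{0\le i\le\nu-1}$ constructed in Theorem~\ref{teo2}, viewed as $\mathbb{E}$-valued holomorphic functions of the parameter $\epsilon$, where $\mathbb{E}$ is the Banach space of holomorphic and bounded functions on $(\mathcal{T}\cap D(0,h''))\times D(0,R_0)$. First, I set $G_i(\epsilon):=X_i(t,z,\epsilon)$ for $\epsilon\in\mathcal{E}_i$; holomorphy and boundedness follow from Theorem~\ref{teo2}, providing hypothesis \textbf{1)} of (RS). For hypothesis \textbf{2)}, Theorem~\ref{teo2} already supplies the key exponential bound (\ref{e536}) in the supremum norm of $\mathbb{E}$, with the exponent $\hat{r}_i\in\{r_2/s_2,r_1/s_1\}$ depending on the type of singular directions (first or second kind) encountered between $\gamma_i$ and $\gamma_{i+1}$. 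Accordingly, I partition $\{0,\ldots,\nu-1\}$ into the disjoint subsets $I_1$ (indices for which only singular directions of first kind occur, giving $\hat{r}_i=\hat{r}_1:=r_2/s_2$) and $I_2$ (indices where at least one singular direction of second kind occurs, giving $\hat{r}_i=\hat{r}_2:=r_1/s_1$). Indices in case (1) of the proof of Theorem~\ref{teo2} may be absorbed by merging adjacent sectors, so one can assume the covering contains no such trivial index.

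Applying Theorem (RS) with this data yields a convergent series $a(\epsilon)\in\mathbb{E}\{\epsilon\}$, two formal series $\hat{G}^1(\epsilon),\hat{G}^2(\epsilon)\in\mathbb{E}[[\epsilon]]$, and holomorphic functions $G_i^j:\mathcal{E}_i\to\mathbb{E}$ such that
$$G_i(\epsilon)=a(\epsilon)+G_i^1(\epsilon)+G_i^2(\epsilon),$$
with $G_i^j$ admitting $\hat{G}^j$ as its $\hat{r}_j$-Gevrey asymptotic expansion on $\mathcal{E}_i$, for $j=1,2$. Setting $X_i^j(t,z,\epsilon):=G_i^j(\epsilon)$, $\hat{X}^j(t,z,\epsilon):=\hat{G}^j(\epsilon)$, and viewing $a$ as $a(t,z,\epsilon)$, I obtain the decomposition (\ref{e715}). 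Defining
$$\hat{X}(t,z,\epsilon):=a(t,z,\epsilon)+\hat{X}^1(t,z,\epsilon)+\hat{X}^2(t,z,\epsilon)=:\sum_{\beta\ge0}H_\beta(t,z)\frac{\epsilon^\beta}{\beta!}\in\mathbb{E}[[\epsilon]]$$
gives the announced formal series, and the coefficients $H_\beta(t,z)$ are unambiguously determined because on any overlap $\mathcal{E}_i\cap\mathcal{E}_{i+1}$ any two $G_i,G_{i+1}$ share the same asymptotic expansion up to the two Gevrey components.

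The last step is to show that $\hat{X}$ formally solves (\ref{e707}). For this I use that each $X_i(t,z,\epsilon)$ is an actual holomorphic solution of (\ref{e500}) on $\mathcal{E}_i$, together with the fact that on each fixed sector $\mathcal{E}_i$ the function $\epsilon\mapsto X_i(t,z,\epsilon)$ admits the common formal series $\hat{X}(t,z,\epsilon)$ as $\epsilon$-asymptotic expansion (of some Gevrey type, hence in particular in the classical Poincar\'e sense with $\mathbb{E}$-valued coefficients). Since the differential operators and the coefficient functions appearing in (\ref{e707}) act only in the variables $t,z$ (which are frozen when reading the equation in $\mathbb{E}$) and multiplication by the convergent functions $b_{\kappa_0\kappa_1}(z,\epsilon)$ and by the polynomials $\epsilon^{r_1}$, $\epsilon^{r_2}$ in $\epsilon$ preserves the space of asymptotic expansions and commutes with the operation of taking Taylor coefficients in $\epsilon$, identifying the $\epsilon^\beta$-coefficient on both sides of the equation $L[X_i]=0$ (with $L$ the operator appearing in (\ref{e707})) and letting $\epsilon\to0$ in $\mathcal{E}_i$ yields a recursion for $(H_\beta)_{\beta\ge0}$ which is precisely the recursion obtained by plugging (\ref{e723}) into (\ref{e707}) and identifying powers of $\epsilon$. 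Hence $\hat{X}$ is a formal solution of (\ref{e707}).

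The main obstacle is not the formal manipulation just described, but the verification of the hypotheses of the two-level Ramis--Sibuya theorem, and in particular the clean separation of indices into $I_1$ and $I_2$: one has to ensure that every pair $(\gamma_i,\gamma_{i+1})$ of consecutive Laplace directions really falls into exactly one of the three geometric situations of the proof of Theorem~\ref{teo2}, and that the exponential bounds in the two cases (\ref{e616}) and (\ref{e638}) are genuinely of the claimed order $\hat{r}_1$, $\hat{r}_2$ in a way compatible with the definition of $\hat{r}_i$ used in (RS). This is exactly where Assumptions (B), (C) and the choice of the associated family of sectors from Definition~\ref{defi3} intervene, guaranteeing that the singular directions of first and second kind are disjoint and that both types of deformations of the Laplace integration path are admissible.
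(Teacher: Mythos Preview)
Your proposal is correct and follows essentially the same route as the paper: define $G_i(\epsilon)=X_i(\cdot,\cdot,\epsilon)$ as $\mathbb{E}$-valued functions, feed the exponential bounds (\ref{e536}) from Theorem~\ref{teo2} into the two-level Ramis--Sibuya theorem to obtain the decomposition (\ref{e715}), and then verify that the common asymptotic series $\hat{X}$ solves (\ref{e707}) by passing to the limit $\epsilon\to 0$ in the $\epsilon$-derivatives of the analytic equation (the paper phrases this last step as differentiating $\ell>r_1+r_2$ times in $\epsilon$ and using $\partial_\epsilon^\ell X_i\to H_\ell$, which is exactly your ``identifying the $\epsilon^\beta$-coefficient and letting $\epsilon\to 0$''). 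Your explicit description of the partition $I_1\cup I_2$ and the remark about absorbing trivial indices are a bit more detailed than the paper, but the substance is identical.
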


\begin{proof}
We consider the family of functions $(X_{i}(t,z,\epsilon))_{0\le i\le\nu-1}$ constructed in Theorem~\ref{teo2}. For every $0\le i\le \nu-1$, we define $G_{i}(\epsilon):=(t,z)\mapsto X_{i}(t,z,\epsilon)$, which turns out to be a holomorphic and bounded function from $\mathcal{E}_{i}$ into the Banach space $\mathbb{E}$ of holomorphic and bounded functions defined in $(\mathcal{T}\cap D(0,h''))\times D(0,R_0)$, for certain positive constants $R_0$ and $h''$ defined in Theorem~\ref{teo2}.

The estimates (\ref{e536}) yield that the cocycle $\Delta_{i}(\epsilon)=G_{i+1}(\epsilon)-G_{i}(\epsilon)$ satisfies exponentially flat bounds of certain Gevrey order $\hat{r}_{i}$, depending on $0\le i \le \nu-1$. Theorem (RS) guarantees the existence of formal power series $\hat{G}(\epsilon),\hat{G}^{1}(\epsilon),\hat{G}^{2}(\epsilon)\in\mathbb{E}[[\epsilon]]$ such that one has the decomposition
$$ G_{i}(\epsilon)=a(\epsilon)+\hat{G}_{i}^{1}(\epsilon)+\hat{G}_{i}^{2}(\epsilon)
$$
for $\epsilon\in\mathcal{E}_{i}$, where $G_{i}^{j}(\epsilon)$ is a holomorphic function on $\mathcal{E}_{i}$ and admits $\hat{G}^{j}_{i}(\epsilon)$ as its Gevrey asymptotic expansion of order $\hat{r}_{j}$ for all $j=1,2$. 

We define 
$$\hat{G}(\epsilon)=:\hat{X}(t,z,\epsilon)=\sum_{\beta\ge0}H_{k}(t,z)\frac{\epsilon^{k}}{k!}.$$
The proof is concluded if we show that $\hat{X}(t,z,\epsilon)$ satisfies (\ref{e707}). For any $0\le i\le \nu-1$ and $j=1,2$, the fact that $G_{i}^{j}(\epsilon)$ admits $\hat{G}_{i}^{j}(\epsilon)$ as its Gevrey expansion of some order $\hat{r}_{j}$ in $\mathcal{E}_{i}$ implies that
\begin{equation}\label{e731}
\lim_{\epsilon\to0,\epsilon\in\mathcal{E}_{i}}\sup_{(t,z)\in(\mathcal{T}\cap\{|t|<h''\})\times D(0,R_0)}|\partial^{\ell}_{\epsilon}X_{i}(t,z,\epsilon)-H_{\ell}(t,z)|=0,
\end{equation}
for every nonegative integer $\ell$. We derive $\ell>r_1+r_2$ times at both sides of equation (\ref{e707}) and let $\epsilon\to0$. From (\ref{e731}) we get a recursion formula for the coefficients in (\ref{e723}) given by
\begin{align*} a_1a_2\partial_z^{S}\left(\frac{H_{\ell}(t,z)}{\ell!}\right)&=\sum_{(s,\kappa_0,\kappa_1)\in\mathcal{S}}\sum_{m=1}^{\ell}\frac{\ell!}{m!(\ell-m)!}\frac{b_{\kappa_0\kappa_1m}(z)}{m!}\frac{\partial_{t}^{\kappa_0}\partial_{z}^{\kappa_1}H_{\ell-m}(t,z)}{(\ell-m)!}\\
&-a_2(t^{k+1}\partial_{t})^{s_1}\partial_{z}^{S}\left(\frac{H_{\ell-r_1}(t,z)}{(\ell-r_1)!}\right)-a_1(t^{k+1}\partial_{t})^{s_2}\partial_{z}^{S}\left(\frac{H_{\ell-r_2}(t,z)}{(\ell-r_2)!}\right)\\
&-(t^{k+1}\partial_{t})^{s_1+s_2}\partial_{z}^{S}\frac{H_{\ell-(r_1+r_2)}(t,z)}{(\ell-(r_1+r_2))!}.
\end{align*}

Following the same steps one concludes that the coefficients in $\hat{G}(\epsilon)$ and the coefficients of the analytic solution, written as a power in the perturbation parameter, coincide. This yields $\hat{X}(t,z,\epsilon)$ is a formal solution of (\ref{e500}), (\ref{e504}).

\end{proof}

\textbf{Remark:} In the case that $r_1-s_1rk<0$, the powers on $\epsilon$ turns out to be negative. The singularities appearing in the problem tend to infinity, and not to 0. The geometric problem that arises is different, but from our point of view, it can be solved in an analogous manner, providing singularities of two different nature as in the problem considered in the present work.


\begin{thebibliography}{99}
\bibitem{ba} W. Balser, From divergent power series to analytic functions. Theory and application of multisummable power series. Lecture Notes in Mathematics, 1582. Springer-Verlag, Berlin, 1994. x+108 pp.
\bibitem{ba2} W. Balser, Formal power series and linear systems of meromorphic ordinary differential equations. Universitext. Springer-Verlag, New York, 2000. xviii+299 pp.
\bibitem{cms} M. Canalis-Durand, J. Mozo-Fern\'andez, R. Sch\"{a}fke, \emph{Monomial summability and doubly singular differential equations}, J. Differential Equtions 233 (2007), no. 2, 485--511.
\bibitem{hssi} P. Hsieh, Y. Sibuya, \emph{Basic theory of ordinary differential equations}. Universitext. Springer-Verlag, New York, 1999.
\bibitem{ka} M. K. Kadalbajoo, K. C. Patidar, \emph{Singularly perturbed problems in partial differential equations:
a survey}. Appl. Math. Comput. 134 (2003), no. 2--3, 371–-429.
\bibitem{kami} S. Kamimoto, \emph{On the exact WKB analysis of singularly perturbed ordinary differential equations at an irregular singular point},
April, 2013, preprint RIMS--1779.
\bibitem{lamasa1} A. Lastra, S. Malek, J. Sanz, \emph{On Gevrey solutions of threefold singular nonlinear partial differential equations.} J. Differential Equations 255 (2013), no. 10, 3205--3232.
\bibitem{lama1} A. Lastra and S. Malek, \emph{On parametric Gevrey asymptotics for some nonlinear initial value Cauchy problems}, preprint 2014.
\bibitem{ma0} S. Malek, \emph{On Singularly perturbed small step size difference-differential nonlinear PDEs.} Journal of Difference Equations and Applications, (2014) vol. 20, Issue 1.
\bibitem{ma1} S. Malek, \emph{On Gevrey functions solutions of partial differential equations with fuchsian and irregular singularities}, J. Dyn. Control Syst. 15 (2009), no.2.
\bibitem{ma2} S. Malek, \emph{On the summability of formal solutionsfor doubly nonlinear partial differential equations}, J. Dyn. Control Syst. 18 (2012), no.1, 45--82. 
\bibitem{suta} K. Suzuki, Y. Takei, \emph{Exact WKB analysis and multisummability -- A case study --}, RIMS K\^{o}ky\^{u}roku, no 1861, 2013, pp. 146--155.
\bibitem{taya} H. Tahara, H. Yamazawa, \emph{Multisummability of formal solutions to the Cauchy problem for some linear partial differential equations}, Journal of Differential equations, Volume 255, Issue 10, 15 November 2013, pages 3592--3637.
\bibitem{ta} Y. Takei, \emph{On the multisummability of WKB solutions of certain singularly perturbed linear ordinary differential equations}, preprint RIMS 1803, 2014.
\end{thebibliography}
\end{document}